\newcommand*{\R}{{\mathbb R}}
\newcommand*{\e}{\varepsilon}
\newcommand*{\la}{\langle}
\newcommand*{\ra}{\rangle}
\def\C{\mathcal C}
\def\U{\mathcal{U}}
\newcommand*{\one}{\mathbbm{1}}
\def\bld{\boldsymbol}
\newcommand*{\Bpi}{\bld\pi}
\renewcommand{\u}{\mathtt{u}}
\renewcommand{\v}{\mathtt{v}}
\newcommand*{\norm}[1]{\left\lVert#1\right\rVert}
\newcommand*{\abs}[1]{\left|#1\right|}
\DeclareMathOperator*{\argmin}{argmin}
\DeclareMathOperator{\diag}{diag}
\def\cu#1{{\color{black}#1}}  
\def\pd#1{{\color{black}#1}} 
\def\gav#1{{\color{black}#1}}
\def\at#1{{\color{black}#1}}
\def\ak#1{{\color{black}#1}} 
\begin{document}

\mainmatter

\title{\pd{Gradient Methods for Problems with Inexact Model of the Objective}}

\titlerunning{Gradient Method for Problems with Inexact Model}
\author{Fedor Stonyakin\inst{1}
\and Darina Dvinskikh\inst{2,3}
\and Pavel Dvurechensky\inst{2,3}
\and Alexey Kroshnin\inst{3,4}
\and Olesya Kuznetsova\inst{4}
\and Artem Agafonov\inst{4}
\and Alexander Gasnikov\inst{3, 4, 5}
\and Alexander Tyurin\inst{5}
\and C\'{e}sar A. Uribe\inst{6}
\and Dmitry Pasechnyuk\inst{7}
\and Sergei Artamonov\inst{5}
}
\authorrunning{F. Stonyakin et al.}
\institute{
    V.\,I.\,Vernadsky Crimean Federal University, Simferopol\\
    \email{fedyor@mail.ru}
    \and Weierstrass Institute for Applied Analysis and Stochastics
    \email{darina.dvinskikh@wias-berlin.de, pavel.dvurechensky@wias-berlin.de}
    \and Institute for Information Transmission Problems RAS, Moscow
    \email{kroshnin@phystech.edu}
    \and
    Moscow Institute of Physics and Technologies, Moscow\\
    \email{gasnikov@yandex.ru, fillifyonk@gmail.com, agafonov.ad@phystech.edu}
    \and  National Research University Higher School of Economics\\
    \email{alexandertiurin@gmail.com, sartamonov@hse.ru }
    \and Massachusetts Institute of Technology, Cambridge \\
    \email{cauribe@mit.edu}\\
    \and 239-th school of St. Petersburg
     \email{pasechnyuk2004@gmail.com}
}

\maketitle

\begin{abstract}
We consider optimization methods for convex minimization problems under inexact information on the objective function. We introduce inexact model of the objective, which as a particular cases includes inexact oracle \cite{devolder2014first} and relative smoothness condition \cite{lu2018relatively}. We analyze gradient method which uses this inexact model and obtain convergence rates for convex and strongly convex problems. To show potential applications of our general framework we consider three particular problems. The first one is clustering by electorial model introduced in \cite{nesterov2018clustering}. The second one is approximating optimal transport distance, for which we propose a Proximal Sinkhorn algorithm. The third one is devoted to approximating optimal transport barycenter and we propose a Proximal Iterative Bregman Projections algorithm. We also illustrate the practical performance of our algorithms by numerical experiments.
\keywords{gradient method \and inexact oracle \and strong convexity \and relative smoothness \and Bregman divergence.}
\end{abstract}

\section{Introduction}
\pd{In this paper we consider optimization methods for convex problems under inexact information on the objective function. This information is given by an object, which we call \emph{inexact model}. Inexact model generalizes the inexact oracle introduced in \cite{devolder2014first}, where inexactness is assumed to be present in the objective value and its gradient. The authors show that, based on these two objects, it is possible to construct a linear function, which is a lower approximation and, up to a quadratic term, an upper approximation of the objective, and these two approximations are enough to obtain convergence rates for gradient method and accelerated gradient method. We go beyond and assume that the approximations of the objective are given through some function, which is not necessarily linear.} 

\pd{This allows us to construct general gradient-type method which is applicable in for different problem classes and allows to obtain convergence rates in these situations as a corollary of our general theorem. Besides convex problems we focus also on strongly convex objectives and illustrate the application of our general theory by two examples. The first example is data clustering by electoral model \cite{nesterov2018clustering}. The second example relates to Wasserstein distance and barycenter, which are widely used in data analysis \cite{cuturi2013sinkhorn,cuturi2014fast}.}

\pd{
Many optimization methods use some model of the objective function to define a step by minimization of this model. Usually the model is constructed using exact first-order \cite{nesterov2004introduction,drusvyatskiy2016nonsmooth,ochs2017non}, second-order \cite{nesterov2006cubic}, or higher-order information \cite{cartis2017improved,nesterov2018implementable} information on the objective.
The influence of inexactness on the convergence of gradient-type methods have being studied at least since \cite{polyak1987introduction}. Accelerated first-order methods with inexact oracle are studied in \cite{aspremont2008smooth,mairal2013optimization,devolder2014first,dvurechensky2016stochastic,cohen2018acceleration}. Some recent works study also non-convex problems in this context \cite{bogolubsky2016learning,dvurechensky2017gradient}. Randomized methods with inexact oracle are also studied in the literature, e.g. coordinate descent in \cite{tappenden2016inexact,dvurechensky2017randomized}, random gradient-free methods and random directional derivative methods in \cite{dvurechensky2018accelerated,dvurechensky2018accelerated2}. A method with inexact oracle for variational inequalities can be found in \cite{dvurechensky2018generalized}. }



The contributions of this paper \cu{can be summarized} as follows.

\pd{
\begin{itemize}
    \item[$\square$] We introduce an inexact model of the objective function for convex optimization problems and strongly convex optimization problems.
    \item[$\square$] We introduce and theoretically analyze a gradient-type method for convex and strongly convex problems with an inexact model of the objective function. For the latter case we prove linear rate of convergence.
    \item[$\square$] We apply our method to, generally speaking, non-convex optimization problem which arises in clustering model introduced in \cite{nesterov2018clustering}. To do this we construct an inexact model and apply our general algorithms and convergence theorems.
    \item[$\square$] We apply our general framework for Wasserstein distance and barycenter problems and show that it allows to construct a proximal \'a la \cite{chen1993convergence} version of the Sinkhorn's algorithm \cite{sinkhorn1974diagonal} and Iterative Bregman Projection algorithm \cite{benamou2015iterative}.
\end{itemize}
}

\pd{
\textbf{Notation.}
We define $\textbf{1} = (1,...,1)^T \in \R^n$, $KL(z | t)$ to be the Kullback-Leibler divergence:
$KL(z|t) = \sum\limits_{k=1}^{n} z_k \ln (z_k/t_k)$,  $\forall z, t \in S_n(1)$,
where $S_n(1)$ is the standard simplex in $\R^n$. We also denote by $\odot$ the entrywise product of two matrices.
}

\section{Gradient Methods with Inexact Model of the Objective}
\label{S:Inexact_model}
Consider \cu{the} convex optimization problem 
\begin{gather}\label{Problem}
f(x) \rightarrow \min_{x \in Q}\cu{,}
\end{gather}
where function \cu{$f$} is convex and $Q \subseteq \R^n$ is a simple convex compact 
set.
\cu{Moreover,}
assume that $\min_{x \in Q} f(x) = f(x_*)$ for some $x_* \in Q$.  


\pd{To solve this problem, we introduce a norm $\|\cdot\|$ on $\R^n$ and a prox-function $d(x)$ which is continuous and convex. We underline that, unlike most of the literature, we do not require $d$ to be strongly convex.
Without loss of generality, we assume that $\min\limits_{x\in \R^n} d(x) = 0$. 
Further, we define \textit{Bregman divergence} $V[y](x) := d(x) - d(y) - \la \nabla d(y), x - y \ra$. Next we define the inexact model of the objective function, which generalizes the inexact oracle of \cite{devolder2014first} (see also \cite{dvurechensky2016stochastic,bogolubsky2016learning,dvurechensky2017universal,gasnikov2017universal,stonyakin2019inexact,tyurin2017fast}).}


\begin{definition}	
\label{model} 
	Let function $\psi_{\delta}(x, y)$ be convex in $x \in Q$ and satisfy $\psi_{\delta}(x, x) = 0$ for all $x \in Q$.
	
	i) We say that $\psi_{\delta}(x, y)$ is \cu{a} $(\delta, L)$-model of the function $f$ at a given point $y$ with respect to $V[y](x)$ iff, 
	for all $x \in Q$, the inequality
	\begin{gather}
	\label{model_def}
	0 \le f(x) - (f(y) + \psi_{\delta}(x, y)) \le LV[y](x) + \delta
	\end{gather}
	holds for some $L, \delta > 0$.
	
	ii) We say that $\psi_{\delta}(x, y)$ is \cu{a} $(\delta, L,\mu)$-model of the function $f$ at a given point $y$ with respect to $V[y](x)$ iff, 
	for all $x \in Q$, the inequality
\begin{equation}\label{Ineq_Bregman_Model}
\mu V[y](x) \le f(x) - (f(y) + \psi_{\delta}(x, y)) \le LV[y](x) + \delta
\end{equation}
\end{definition}

\pd{Note that we allow $L$ to depend on $\delta$. We refer to the case i) as convex case and to the case ii) as strongly convex case.} 
\pd{
\begin{remark}
In the particular case of function $f$ possessing  $(\delta, L)$-oracle \cite{devolder2014first} at a given point $y$, one has  
$$
0 \leq f(x) - f(y) - \langle g_{\delta}(y), x - y\rangle \leq \frac{L}{2}\norm{x - y}^2 + \delta
$$
and $\psi_{\delta}(x, y) = \langle g_{\delta}(y), x - y\rangle$.
In the same way, if function $f$ is equipped with $(\delta, L,\mu)$-oracle \cite{devolder2013firstCORE}, i.e.,
$$
 \frac{\mu}{2}\norm{x - y}^2 \leq f(x) - f(y) - \langle g_{\delta, L,\mu}(y), x - y\rangle \leq \frac{L}{2}\norm{x - y}^2 + \delta \quad \forall x \in Q,
$$
we have $\psi_{\delta}(x, y) = \langle g_{\delta, L,\mu}(y), x - y\rangle$.
\end{remark} 
}

\pd{The algorithms we develop are based on solving auxiliary simple problems on each iteration. We assume that these problems can be solved inexactly and, following  \cite{ben-tal2015lectures} introduce a definition of inexact solution of a problem.}

\begin{definition}
\label{def_precision}
\label{solNemirovskiy}
Consider a convex minimization problem 
\begin{align}\label{min_prob}
\phi(x) \rightarrow \min_{x \in Q \subseteq \R^n}.
\end{align}
If $\phi$ is smooth, we say that we solve it with $\widetilde{\delta}$-`precision' ($\widetilde{\delta} \geq 0$) if we find $\tilde{x}$ s.t.
$ \max_{x\in Q}\langle \nabla \phi(\tilde x), \tilde{x}-x \rangle = \widetilde{\delta}$.
If $\phi$ is general convex, we say that we solve this problem with $\widetilde{\delta}$-`precision' if we find $\tilde{x}$ s.t. 
$\exists h \in \partial\gav{\phi}(\widetilde{x}), \,\,\, \langle h, x_* - \widetilde{x} \rangle \geq -\widetilde{\delta}$.
In both cases we denote this $\tilde{x}$ as $\argmin_{x \in Q}^{\widetilde{\delta}}\gav{\phi}(x)$.
\end{definition}
We notice that the case $\widetilde{\delta} = 0$ corresponds to the case when $\tilde x$ is an exact solution of convex optimization problem~\eqref{min_prob} \cite{ben-tal2015lectures,nesterov2004introduction}.
The connection of Definition \ref{solNemirovskiy} with standard definitions of inexact solution, e.g. in terms of the objective residual, can be found in Appendix~\ref{InexactSolutions}.

\subsection{Convex Case}
\pd{In this subsection we describe a gradient-type method for problems with  $(\delta, L)$-model of the objective. This algorithm is a natural extension of gradient method, see \cite{gasnikov2017universal,stonyakin2019inexact,tyurin2017fast}.}

\begin{algorithm}
\caption{\pd{Gradient method with $(\delta, L)$-model of the objective.}}
\label{Alg1_nonadaptive}
\begin{algorithmic}[1]
\STATE \textbf{Input:} $x_0$ is the starting point, $L > 0$ and 
$\delta,\widetilde{\delta}>0$.
\FOR{$k \geq 0$}
\STATE \begin{equation}
\phi_{k+1}(x) := \psi_{\delta}(x, x_k)+L V[x_k](x), \quad
x_{k+1} := {\arg\min_{x \in Q}}^{\widetilde{\delta}} \phi_{k+1}(x). 
\end{equation}
\ENDFOR
\ENSURE 
$\bar{x}_N= \frac{1}{N}\sum_{k=0}^{N-1}x_{k+1}$
\end{algorithmic}
\end{algorithm}



\begin{theorem}
	\label{mainTheoremDL_G}
		Let $V[x_0](x_*) \leq R^2$, where $x_0$~ is the starting point, and $x_*$~ is the nearest minimum point to the point $x_0$ in the sense of Bregman divergence $V[y](x)$. Then, for the sequence, generated by Algorithm~\ref{Alg1_nonadaptive} the following inequality holds:
	\begin{equation}
	f(\bar{x}_N) - f(x_*) \leq  \frac{LR^2}{N} + \widetilde{\delta} + \delta,
	\end{equation}
\end{theorem}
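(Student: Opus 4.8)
The plan is to derive a one-step inequality that telescopes in the Bregman divergence and then to pass from the iterates $x_{k+1}$ to their average $\bar{x}_N$ by convexity of $f$. The engine of the argument is the optimality condition for the inexact prox-step, combined with the two-sided bound \eqref{model_def} defining the $(\delta,L)$-model.

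First I would record the optimality condition for the subproblem. Since $\phi_{k+1}(x)=\psi_\delta(x,x_k)+LV[x_k](x)$ is convex and $x_{k+1}=\argmin^{\widetilde\delta}_{x\in Q}\phi_{k+1}$, Definition \ref{solNemirovskiy} supplies a subgradient $h_{k+1}\in\partial_x\phi_{k+1}(x_{k+1})$ with $\langle h_{k+1},x_*-x_{k+1}\rangle\ge-\widetilde\delta$. Writing $h_{k+1}=g_{k+1}+L(\nabla d(x_{k+1})-\nabla d(x_k))$ with $g_{k+1}\in\partial_x\psi_\delta(x_{k+1},x_k)$, and invoking the three-point identity $\langle\nabla d(x_{k+1})-\nabla d(x_k),\,x_*-x_{k+1}\rangle=V[x_k](x_*)-V[x_{k+1}](x_*)-V[x_k](x_{k+1})$, this turns into an upper bound on $\langle g_{k+1},x_{k+1}-x_*\rangle$.

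Next I would feed this into the model inequalities. Convexity of $\psi_\delta(\cdot,x_k)$ gives $\psi_\delta(x_{k+1},x_k)\le\psi_\delta(x_*,x_k)+\langle g_{k+1},x_{k+1}-x_*\rangle$, while the upper bound in \eqref{model_def} at $x=x_{k+1}$, $y=x_k$ gives $f(x_{k+1})\le f(x_k)+\psi_\delta(x_{k+1},x_k)+LV[x_k](x_{k+1})+\delta$. Combining these, the crucial observation is that the term $LV[x_k](x_{k+1})$ coming from the upper bound is cancelled exactly by the $-LV[x_k](x_{k+1})$ produced by the three-point identity; this is precisely why no strong convexity of $d$ is needed. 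After the cancellation, applying the lower bound of \eqref{model_def} at $x=x_*$ in the form $f(x_k)+\psi_\delta(x_*,x_k)\le f(x_*)$ yields the clean per-iteration estimate
\[
f(x_{k+1})-f(x_*)\le L\big(V[x_k](x_*)-V[x_{k+1}](x_*)\big)+\delta+\widetilde\delta.
\]

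Finally, summing this over $k=0,\dots,N-1$ telescopes the Bregman terms; discarding $-LV[x_N](x_*)\le0$ and using $V[x_0](x_*)\le R^2$ bounds the sum by $LR^2+N(\delta+\widetilde\delta)$. Dividing by $N$ and applying Jensen's inequality $f(\bar{x}_N)\le\frac1N\sum_{k=0}^{N-1}f(x_{k+1})$ delivers the claimed bound. The only delicate point is the exact cancellation of the $V[x_k](x_{k+1})$ terms, which hinges on the model's quadratic remainder being the very same weighted divergence $LV[x_k](\cdot)$ that regularizes $\phi_{k+1}$; the bookkeeping of the inexact-minimization slack $\widetilde\delta$ and the verification of the three-point identity are otherwise routine.
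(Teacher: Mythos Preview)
Your argument is correct and follows essentially the same route as the paper: the paper packages your three-point identity plus the inexact optimality condition into a separate lemma (Lemma~\ref{MainLemma}), then combines it with the two sides of \eqref{model_def} exactly as you do to obtain the per-step inequality $f(x_{k+1})-f(x_*)\le L\big(V[x_k](x_*)-V[x_{k+1}](x_*)\big)+\delta+\widetilde\delta$, and finishes by telescoping and Jensen. The only organizational difference is that the paper first proves the bound for the adaptive Algorithm~\ref{Alg1} and then specializes to constant $L_{k+1}=L$, whereas you argue directly for Algorithm~\ref{Alg1_nonadaptive}.
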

In appendix \ref{ProofGM} we prove this theorem and provide an adaptive version of Algorithm \ref{mainTheoremDL_G}, which does not require knowledge of the constant $L$.

\subsection{Strongly Convex Case}\label{SC}
\pd{
In this subsection we consider problem \eqref{Problem} with $(\delta, L, \mu)$-model of the objective function satisfying \eqref{Ineq_Bregman_Model}. This more strong assumption allows us to obtain linear rate of convergence of the proposed algorithm.
Our algorithm is listed as Algorithm \ref{Alg2} and it is a version of Algorithm  \ref{Alg1_nonadaptive}, which is adaptive to possibly unknown constant $L$.
}






\begin{algorithm}
\caption{Adaptive gradient method with an oracle using the $(\delta, L, 
\mu)$-model}
\label{Alg2}
\begin{algorithmic}[1]
\STATE \textbf{Input:} $x_0$ is the starting point, $\mu>0$ $L_{0} \geq 2\mu$ and 
$\delta$.
\STATE Set
 $S_0 := 0 $
\FOR{$k \geq 0$}
\STATE Find the smallest $i_k\geq 0$ such that
\begin{equation}\label{exitLDL_G_S}
f(x_{k+1}) \leq f(x_{k}) + \psi_{\delta}(x_{k+1}, x_{k}) +L_{k+1}V[x_{k}](x_{k+1}) + \delta,
\end{equation}
where $L_{k+1} = 2^{i_k-1}L_k$ for $L_k \geq 2 \mu$ and $L_{k+1} = 2^{i_k}L_k$ for $L_k < 2 \mu$,\\ $\alpha_{k+1}:= \frac{1}{L_{k+1}}$, $S_{k+1} := S_k + \alpha_{k+1}$.
\begin{equation}\label{equmir2DL_G_S}
\phi_{k+1}(x) := \psi_{\delta}(x, x_k)+L_{k+1}V[x_k](x), \quad
x_{k+1} := {\arg\min_{x \in Q}}^{\widetilde{\delta}} \phi_{k+1}(x).
\end{equation}
\ENDFOR
\ENSURE 
$\bar{x}_N= \frac{1}{S_N}\sum_{k=0}^{N-1}\frac{x_{k+1}}{L_{k+1}}$
\end{algorithmic}
\end{algorithm}

Let's introduce average parameter $\hat{L}$:
	$$1-\dfrac{\mu}{\hat{L}} = \sqrt[k+1]{ \left(1-\dfrac{\mu}{L^{k+1}}\right)\left(1-\dfrac{\mu}{L_{k}}\right)\ldots\left(1-\dfrac{\mu}{L_{1}}\right)}.
	$$
Note that by $L_i \ge \mu~(i=1,2,\ldots)$ 
	$$\min\limits_{1\le i \le k+1}L_i \le \hat{L} \le \max\limits_{1\le i \le k+1}L_i \pd{\leq 2L}.$$
 
The following result hold\pd{s}.
\begin{theorem}\label{thm_fedyor}
Let $\psi_{\delta}(x, y)$ is a $(\delta, L, \mu)$-model for $f$ w.r.t. $V[y](x)$. Then, after $k$ iterations of Algorithm \ref{Alg2}, we have 
\begin{equation}\label{fedyor_strong_1}
V[x^{k+1}](x_*) \le \dfrac{2L(\delta+\widetilde{\delta})}{\mu^2}\left(1 - \left(1-\dfrac{\mu}{2L}\right)^{k+1}\right) + \left(1-\dfrac{\mu}{\hat{L}}\right)^{k+1}V[x^0](x_*),
\end{equation}
\begin{equation}\label{fedyor_strong_2}
f(x^{k+1}) - f(x_*) \le  \dfrac{4L^2(\delta+\widetilde{\delta})}{\mu^2}\left(1 - \left(1-\dfrac{\mu}{2L}\right)^{k+1}\right) + 2L\left(1-\dfrac{\mu}{\hat{L}}\right)^{k+1}V[x^0](x_*).
\end{equation}	
\end{theorem}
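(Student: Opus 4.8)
The plan is to distill the entire statement into a single one-step contraction for the Bregman radius $R_k := V[x_k](x_*)$ and then unroll it. First I would exploit the $\widetilde{\delta}$-optimality of $x_{k+1}$ for the convex subproblem $\phi_{k+1}(x) = \psi_{\delta}(x,x_k) + L_{k+1} V[x_k](x)$ (convex because $d$, hence $V[x_k](\cdot)$, is convex). By Definition \ref{solNemirovskiy} this produces a subgradient
\[
h = h_\psi + L_{k+1}\big(\nabla d(x_{k+1}) - \nabla d(x_k)\big), \qquad h_\psi \in \partial_x \psi_{\delta}(x_{k+1},x_k),
\]
with $\langle h, x_* - x_{k+1}\rangle \ge -\widetilde{\delta}$. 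Into this I would substitute the exact three-point identity
\[
\langle \nabla d(x_{k+1}) - \nabla d(x_k),\, x_* - x_{k+1}\rangle = V[x_k](x_*) - V[x_{k+1}](x_*) - V[x_k](x_{k+1}),
\]
which holds for any convex $d$ and is precisely what replaces strong convexity of the prox-function, together with $\langle h_\psi, x_* - x_{k+1}\rangle \le \psi_{\delta}(x_*,x_k) - \psi_{\delta}(x_{k+1},x_k)$ from convexity of $\psi_{\delta}(\cdot,x_k)$. Rearranging gives the prox-inequality
\[
\psi_{\delta}(x_{k+1},x_k) + L_{k+1} V[x_k](x_{k+1}) \le \psi_{\delta}(x_*,x_k) + L_{k+1}\big(R_k - R_{k+1}\big) + \widetilde{\delta}.
\]

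Next I would feed in the line-search exit condition \eqref{exitLDL_G_S}, namely $f(x_{k+1}) \le f(x_k) + \psi_{\delta}(x_{k+1},x_k) + L_{k+1} V[x_k](x_{k+1}) + \delta$, and the lower (strongly convex) half of the model \eqref{Ineq_Bregman_Model} at $y = x_k$, $x = x_*$, which reads $f(x_k) + \psi_{\delta}(x_*,x_k) \le f(x_*) - \mu R_k$. Combining the three yields the master inequality
\[
f(x_{k+1}) - f(x_*) + L_{k+1} R_{k+1} \le (L_{k+1}-\mu) R_k + \widetilde{\delta} + \delta.
\]
Dropping the nonnegative gap $f(x_{k+1}) - f(x_*)$ and dividing by $L_{k+1}$ gives the contraction $R_{k+1} \le \big(1 - \mu/L_{k+1}\big) R_k + (\widetilde{\delta}+\delta)/L_{k+1}$.

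Unrolling this recursion from $R_0$, the homogeneous part produces $\prod_{i=1}^{k+1}\big(1 - \mu/L_i\big)$, which by the definition of $\hat L$ equals $\big(1 - \mu/\hat L\big)^{k+1}$, giving the second term of \eqref{fedyor_strong_1}. For the inhomogeneous part I would use the uniform bounds $\mu \le L_i \le 2L$ to estimate each factor crudely, $1/L_j \le 1/\mu$ and $1 - \mu/L_i \le 1 - \mu/(2L)$, so that
\[
\sum_{j=1}^{k+1} \frac{\widetilde{\delta}+\delta}{L_j}\prod_{i=j+1}^{k+1}\Big(1-\tfrac{\mu}{L_i}\Big) \le \frac{\widetilde{\delta}+\delta}{\mu}\sum_{m=0}^{k}\Big(1-\tfrac{\mu}{2L}\Big)^m = \frac{2L(\delta+\widetilde{\delta})}{\mu^2}\Big(1-\big(1-\tfrac{\mu}{2L}\big)^{k+1}\Big),
\]
which is exactly the first term of \eqref{fedyor_strong_1}.

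Finally, for the function-value bound \eqref{fedyor_strong_2} I would return to the master inequality, discard the nonnegative $L_{k+1} R_{k+1}$ and use $L_{k+1}-\mu \le 2L$ to get $f(x_{k+1}) - f(x_*) \le 2L R_k + \widetilde{\delta}+\delta$, then substitute the already-proved bound \eqref{fedyor_strong_1} (using $\hat L \le 2L$) and absorb the lower-order $\widetilde{\delta}+\delta$ into the $\tfrac{4L^2(\delta+\widetilde{\delta})}{\mu^2}$ term; equivalently, one simply multiplies \eqref{fedyor_strong_1} by $2L$. The main obstacle is the one-step lemma of the first paragraph: converting the inexact minimizer of Definition \ref{solNemirovskiy} into a usable subgradient inequality at the global optimum, and doing so with a prox-function $d$ that is only convex, which forces reliance on the exact Bregman three-point identity rather than on a quadratic lower bound. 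The secondary source of care is the adaptive, non-monotone update of $L_k$, since the same uniform control $\mu \le L_i \le 2L$ must simultaneously drive the geometric sum and justify the passage to the average $\hat L$.
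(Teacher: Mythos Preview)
Your proposal is correct and follows essentially the same route as the paper. You reprove Lemma~\ref{MainLemma} inline (subgradient of the inexact minimizer plus the Bregman three-point identity), combine it with the line-search exit condition and the lower model bound to obtain the same master inequality $f(x_{k+1})-f(x_*)+L_{k+1}R_{k+1}\le(L_{k+1}-\mu)R_k+\delta+\widetilde{\delta}$, and unroll exactly as the paper does, including the use of $\mu\le L_i\le 2L$ and the definition of $\hat L$. For \eqref{fedyor_strong_2} the paper's cleaner variant is your ``equivalently'' remark: keep the term $\tfrac{f(x_{k+1})-f(x_*)}{L_{k+1}}$ on the left throughout the unrolling (the right-hand side is unchanged), then multiply by $L_{k+1}\le 2L$; your first variant via $2LR_k$ would yield exponents $k$ rather than $k{+}1$ and a different averaged $\hat L$, so it does not reproduce the stated bound verbatim.
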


The details of proof can be found in Appendix \ref{sec: ProofThFedyor}. \pd{Note that Algorithm \ref{Alg1_nonadaptive} also has linear convergence rate for the strongly convex case. The details can be found in Appendix \ref{sec: ProofTh}. The benefit of Algorithm \ref{Alg1_nonadaptive} is that there is no need to know the strong convexity parameter $\mu$ for the algorithm to work. On the other hand, this parameter is needed for assessing the quality of the solution returned by the algorithm. The benefit of the adaptive version is that it does not require to know the value of the parameter $L$ and adapts to it. Moreover, the parameter $L$ can be different for the model at different points and the algorithm adapts also for the local value of this parameter.}

\section{\pd{Clustering by Electorial Model}}
\label{S:clustering}
\pd{In this section we consider clustering model introduced in \cite{nesterov2018clustering}. In this model voters (data points) choose a party (cluster) in an iterative manner by alternative minimization of the following function.}
\begin{equation}\label{Nesterov_Electoral_Model}
f_{\mu_1, \mu_2}(x = (z, p)) = g(x) + \mu_1 \sum\limits_{k=1}^{n}z_k \ln z_k + \frac{\mu_2}{2} \|p\|^2_2 \rightarrow \min_{z \in S_n(1), p \in \R^m_{+}},
\end{equation}
\pd{where $\R_{+}^m$ is a non-negative orthant and $S_n(1)$ is the standard $n$-dimensional simplex in $\R^n$.}
\pd{The vector $z$ contains probabilities with which voters choose the considered party, and vector $p$ describes the position of the party in the space of voter opinions. The minimized potential is the result of combining two optimization problems into one: voters choose the party whose position is closest to their personal opinion and the party adjusts its position minimizing dispersion and trying not to go too far from its initial position. Yu. Nesterov in \cite{nesterov2018clustering} used sequential elections process to show that under some natural assumptions the process convergence and gives the clustering of the data-points. This was done for a particular choice of the function $g$ which has limited interpretability. We show, how our framework of inexact model of the objective allows to construct a gradient-type method for the case of general function $g$, which is not necessarily convex.}


Assume that $g(x)$ (generally, non-convex) is an function with $L_g$-Lipschitz continuous gradient:
\begin{equation}\label{lipcon}
	\|\nabla g(x) - \nabla g(y)\|_{*} \le L_g\|x - y\| \quad \forall x, y \in S_n(1) \times \R_{+}^m,
	\end{equation}
\pd{and, following \cite{nesterov2018clustering}, the numbers $\mu_1, \mu_2$ are chosen such that} $L_g \le \mu_1$ and $L_g \le \mu_2$.
	
The norm $\|\cdot\|$ in $S_n(1)\times \R_{+}^m$ is defined as 
$\|(z, p)\|^2 = \|z\|^2_1 + \|p\|^2_2$,
where $\|z\|_1 = \sum\limits_{k = 1}^n z_k$ and $\|p\|_2 = \sqrt{\sum\limits_{k = 1}^m p_k^2}$. This is indeed a norm since, for $x = (z_x, p_x)$ and $y = (z_y, p_y)$ we have: 

$$\|x + y\| = \sqrt{\|z_x + z_y\|^2_1 + \|p_x + p_y\|^2_2} \le \sqrt{(\|z_x\|_1 + \|z_y\|_1)^2 + (\|p_x\|_2 + \|p_y\|_2)^2} \le $$
$$\le \sqrt{\|z_x\|^2_1 + \|p_x\|^2_2} + \sqrt{\|z_y\|^2_1 + \|p_y\|^2_2} = \|x\| + \|y\|,$$
because $\sqrt{(a+b)^2 + (c+d)^2} \le \sqrt{a^2 +c^2} + \sqrt{b^2 +d^2} $ for each $a, b, c, d \ge 0$.

Let us show that
$$
\psi_{\delta}(x, y) = \langle \nabla g(y), x-y \rangle  - L_g \cdot KL(z_x|z_y) - \frac{L_g }{2}\|p_x-p_y\|^2_2  +
$$
$$
+ \mu_1 (KL(z_x | \textbf{1}) - KL(z_y | \textbf{1})) + \frac{\mu_2 }{2} \left(\|p_x\|^2_2-\|p_y\|^2_2\right)
$$
is a $(0, 2L_g)$-model of $f_{\mu_1, \mu_2}(x)$ in $x$ with respect to the following Bregman \pd{divergence}
\begin{eqnarray*}
V[y](x) = KL(z_x | z_y) + \frac{1}{2}\|p_x - p_y\|^2_2.
\end{eqnarray*}
 
It is easy to see that $\psi_{\delta}(x, x) = 0$. 
Let us show, that inequality \eqref{model_def} holds for $\psi_{\delta}(x,y)$. For the function $g(x)$ satisfying \eqref{lipcon} we have:
\begin{equation}\label{ineq1}
|g(x) - g(y) - \langle \nabla g(y), x-y\rangle | \leq \frac{L_g}{2}\|x - y\|^2.
\end{equation}

It means that $f_{\mu_1, \mu_2}(x) - f_{\mu_1, \mu_2}(y) - \psi_{\delta}(x, y) =$
\begin{gather}
	 =  g(x) - g(y)  - \langle \nabla g(y), x-y\rangle + \mu_1 \cdot KL(z_x | \textbf{1}) - \mu_1 \cdot KL(z_y | \textbf{1}) +\\+ \frac{\mu_2}{2}\|p_x\|^2_2 - \frac{\mu_2}{2}\|p_y\|^2_2 - \mu_1 \cdot KL(z_x | \textbf{1}) + \mu_1 \cdot KL(z_y | \textbf{1}) + L_g \cdot KL(z_x | z_y) + \\ + \frac{L_g}{2}\|p_x - p_y\|^2_2 -\frac{\mu_2}{2}\|p_x\|^2_2 + \frac{\mu_2}{2}\|p_y\|^2_2 = 
	\end{gather}
	$$
	= g(x) - g(y) - \langle \nabla g(y), x-y\rangle + L_g \cdot KL(z_x | z_y) + \frac{L_g}{2}\|p_x - p_y\|^2_2.
	$$
	Along with \eqref{ineq1} and $KL(z_x | z_y) \ge \frac{\|z_x - z_y\|_1^2}{2}$ it \pd{leads} to
	\begin{eqnarray*}
	f_{\mu_1, \mu_2}(x) - f_{\mu_1, \mu_2}(y) - \psi_{\delta}(x, y) \le \frac{L_g}{2}\|x - y\|^2 + L_g\cdot KL(z_x | z_y) + \frac{L_g}{2}\|p_x - p_y\|^2_2, \\
		f_{\mu_1, \mu_2}(x) - f_{\mu_1, \mu_2}(y) - \psi_{\delta}(x, y) \ge -\frac{L_g}{2}\|x - y\|^2 + L_g \cdot KL(z_x | z_y) + \frac{L_g}{2}\|p_x - p_y\|^2_2.
	\end{eqnarray*}

Finally, by definition of the norm $\|\cdot\|$\pd{,} we have
	\begin{eqnarray*}
	0 \le f_{\mu_1, \mu_2}(x) - f_{\mu_1, \mu_2}(y) - \psi_{\delta}(x, y) \le 2L_g \cdot KL(z_x | z_y) + L_g\|p_x - p_y\|^2_2 = 2L_g V[y](x),
	\end{eqnarray*}
i.e. $\psi_{\delta}(x, y)$ is a $(0, 2L_g)$-model of the function $f_{\mu_1, \mu_2}$. 

Further, for the case $\min\{\mu_1, \mu_2\} > L_g$ $\psi_{\delta}(x, y)$ is a strongly convex w.r.t. $V[y](x)$:
\begin{equation}
\psi_{\delta}(x, y) = \psi_{\delta}^{lin}(x, y) + (\mu_1 - L_g) \cdot KL(z_x |z_y) + \frac{\mu_2 - L_g}{2} \|p_x-p_y\|^2_2 \ge
\end{equation}
$$
\ge (\min\{\mu_1, \mu_2\} - L_g) \cdot V[y](x),
$$
where
\begin{equation}
\psi_{\delta}^{lin}(x, y) = \langle \nabla g(y), x-y \rangle + \mu_1 \langle \nabla KL(z_y | 1), z_x - z_y\rangle + \mu_2 \langle p_y , p_x - p_y\rangle
\end{equation}
is linear \pd{in} $y$. Indeed,
$$
\psi_{\delta}(x, y) = \langle \nabla g(y), x-y \rangle + \mu_1 \langle \nabla KL(z_y | 1), z_x - z_y\rangle  + \mu_2 \langle p_y , p_x - p_y\rangle -
$$
$$
- L_g \cdot KL(z_x|z_y) - \frac{L_g}{2} \|p_x - p_y\|^2_2  + \mu_1 \left(\cdot KL(z_x | \textbf{1}) - KL(z_y | \textbf{1}\right) - \langle \nabla KL(z_y | 1), z_x - z_y\rangle) + 
$$
$$
+ \frac{\mu_2 }{2}\left(\|p_x\|^2_2 - \| p_y\|^2_2 -  \langle 2\cdot p_y , p_x \rangle - p_y\right) = 
$$
$$ = \psi_{\delta}^{lin}(x, y) + (\mu_1 - L_g)\cdot KL(z_x | z_y) + \frac{\mu_2 - L_g}{2} \cdot\|p_x-p_y\|^2_2 .
$$

Thus, $\psi_{\delta}^{lin}(x, y)$ is a $(0, \max\{\mu_1, \mu_2\}+L_g, \min\{\mu_1, \mu_2\} - L_g)$-model of the function $f_{\mu_1, \mu_2}$:
\begin{equation}
  f_{\mu_1, \mu_2}(y) + \psi_{\delta}^{lin}(x, y) + (\min\{\mu_1, \mu_2\} - L_g)V[y](x) \le   f_{\mu_1, \mu_2}(x) 
\end{equation}  
and
\begin{equation}
 f_{\mu_1, \mu_2}(x) \le f_{\mu_1, \mu_2}(y) + \psi_{\delta}^{lin}(x, y) + (\max\{\mu_1, \mu_2\} + L_g)V[y](x).
\end{equation}

So, we can apply our Algorithms \ref{Alg1_nonadaptive} and \ref{Alg2} to \pd{the problem} \eqref{Nesterov_Electoral_Model}.


\section{Proximal Sinkhorn Algorithm for Optimal Transport}
\label{OT_PS}

In this section we consider the problem of approximating an optimal transport (OT) distance. Recently optimal transport distances has gained a lot of interest in machine learning and statistical applications \cite{arjovsky2017wasserstein,bigot2012consistent,barrio2015statistical,ebert2017construction,le2017existence,peyre2017computational,solomon2014wasserstein}. To state the OT problem, assume that we are given two discrete probability measures $p,q \in S_n(1)$ and  ground cost matrix $C \in \R_+^{n\times n}$, then the optimal transport problem is
\begin{align}\label{wass_dist}
     \la C,  \pi\ra
    \rightarrow 
    \min_{\pi \in \mathcal{U}(p,q)} , \;\; \mathcal{U}(p,q) = \{\pi \in \R_+^{n \times n}: \pi \boldsymbol{1} = p, \pi^T \boldsymbol{1} = q\}
\end{align}
where \pd{$\la \cdot, \cdot \ra$ denotes Frobenius dot product of matrices}, $\pi$ is a transportation plan.
The above optimal transport problem is the Kantorovich \cite{kantorovich1942translocation} linear program (LP)  formulation of the problem, which goes back to the Monge's problem \cite{monge1781memoire}. 
The best known theoretical complexity for this linear program is \footnote{Here and below for all (large) $n$: $\widetilde{O}(g(n)) \le \tilde{C}\cdot(\ln n)^r g(n)$ with some constants $\tilde{C} > 0$ and $r \ge 0$. Typically, $r = 1$, but not in this particular case. If $r=0$, then $\widetilde{O}(\cdot) = O(\cdot)$.} $\widetilde{O}(n^{2.5})$, see   \cite{lee2014path}. However, there is no known practical implementation of this algorithm. In practice, the simplex method gives complexity $O(n^3 \ln n)$ \cite{pele2009fast}. We follow the alternative approach based on entropic regularization of the OT problem \cite{cuturi2013sinkhorn}. We show how our general framework of inexact model of the objective allows to construct Proximal Sinkhorn algorithm with better computational stability in comparison with the standard Sinkhorn algorithm.  
 
For any optimization problem \eqref{Problem}, $\psi_{\delta}(x,y) = f(x) - f(y)$ satisfies Definition \ref{model} with any $L\geq 0$. In this case, our Algorithm~\ref{Alg1_nonadaptive} becomes inexact \textit{Bregman proximal gradient method}
\begin{align}\label{prox_meth}
    x^{k+1} = {\arg\min_{x\in Q}}^{\tilde{\delta}} \{f(x) + L V[x^k](x)\}. 
\end{align}

Our idea is to apply this proximal method for the OT problem and approximately find the next iterate $x^{k+1}$ by Sinkhorn's algorithm \cite{sinkhorn1974diagonal,cuturi2013sinkhorn,altschuler2017near-linear,dvurechensky2018computational}. The latter is made possible by the choice of $V$ as KL divergence, which makes the problem of finding the point $x^{k+1}$ to be an entropy-regularized OT problem, which, in turn, is efficiently solvable by the Sinkhorn algorithm.






Consider the iterates 
\begin{align}\label{prox_for_sinkhorn}
    \pi^0 = \ak{p q^T \in \mathcal{U}(p,q)}, \quad   \pi^{k+1} & = {\arg\min\limits_{\pi \in \mathcal{U}(p,q)}}^{\e/2}  \left\{
             \la C, \pi \ra + L \cdot KL(\pi| \pi^k)
        \right\}  \notag \\
        &=  {\arg\min\limits_{\pi \in \mathcal{U}(p,q)}}^{\e/2} KL\left(\pi \left| \pi^k \odot \exp\left(-\frac{C}{L}\right)\right.\right),  
\end{align}
which we call outer iterations. On each outer iteration we use Sinkhorn's algorithm \ref{Alg:Sinkhorn}, which solves the minimization problem in \eqref{prox_for_sinkhorn} with accuracy $\tilde{\e}$ in terms of its objective residual. 
Notice that unlike \cite{dvurechensky2018computational} we provide a slightly refined theoretical bounds for the Sinkhorn's algorithm not depending on vectors $p$, $q$.

\begin{algorithm}[tb]
\caption{Sinkhorn's Algorithm}
\label{Alg:Sinkhorn}
  \begin{algorithmic}[1]
     \REQUIRE Accuracy $\tilde{\e}$, matrix $K = e^{- C / \gamma}$, marginals $p, q \in S_n(1)$.
     \STATE Set $t = 0$, \ak{$u^0 = \ln p$, $v^0 = \ln q$, $\e' = \frac{\tilde{\e}}{4} \left(\max_{i, j} C_{i j} - \min_{i, j} C_{i j} + 2 \gamma \ln\left(\tfrac{4 \gamma n^2}{\tilde{\e}}\right)\right)^{-1}$}.
     \REPEAT
        \IF{$t \bmod 2 = 0$}
        	\STATE $u^{t + 1} = u^t + \ln p - \ln(B(u^t, v^t) \one)$, where $B(u, v) := \diag(e^u) K \diag(e^v)$
            \STATE $v^{t + 1} = v^t$
        \ELSE
        	\STATE $v^{t + 1} = v^t + \ln q - \ln(B(u^t, v^t)^T \one)$
            \STATE $u^{t + 1} = u^t$
        \ENDIF
        \STATE $t = t + 1$
	\UNTIL{$\norm{B(u^t, v^t) \one - p}_1 + \norm{B(u^t, v^t)^T \one - q}_1 \le \e'$}
	\STATE Find $\hat{\pi}$ as the projection of $B(u^t, v^t)$ on $\U(p,q)$ by Algorithm~2 in \cite{altschuler2017near-linear}.
    \ENSURE $\hat{\pi}$.	
  \end{algorithmic}
\end{algorithm}

\begin{theorem}\label{Th:ProxSinkhorn}
Let $\bar \pi^N = \frac{1}{N} \sum_{k=1}^N \pi^k$, where $\pi^k$ are the iterates of \eqref{prox_for_sinkhorn}. Then, after $N = \frac{4 L \ln n}{\e}$ iterations, it holds that $\la C, \bar \pi^N \ra \leq 
    \min_{\pi \in \mathcal{U} (p,q)} \la C, \pi \ra + \e$. Moreover, the accuracy $\tilde{\e}$ for the solution of \eqref{prox_for_sinkhorn}  is sufficient to be set as $\widetilde{O}(\ak{\e^4 / (L n^4)})$ and the complexity of Sinkhorn's Algorithm on $k$-th iteration is bounded as
\begin{equation}\label{IterationCost}
    \ak{n^2 \widetilde{O}\left(\min\Bigg\{\exp\left(\frac{\bar{c}_k}{L}\right) \left(\frac{\bar{c}_k}{L} + \ln\frac{\bar{c}_k}{\tilde{\e}}\right),\, \frac{\bar{c}_k^2}{L \tilde{\e}}\Bigg\}\right)},
\end{equation}
where\footnote{This bound is rough and typically $\bar{c}_k$ is smaller in practice. By proper rounding of $\pi^k$ one can guarantee (without loss of generality) that $\pi^k_{ij}\ge \e/(2 n^2 \ak{\norm{C}_\infty})$, which gives 
\[
\ak{\frac{\bar{c}_k}{L} = \frac{\norm{C}_\infty}{L} + \ln\left(\frac{2 n^2 \norm{C}_\infty}{\e}\right).}
\]
But, in practice there often is no need to make `rounding' after each outer iteration. 
}  
\begin{equation}\label{Cbound}
    \ak{\bar{c}_k = \norm{C}_\infty + L \ln\left(\frac{\max_{i, j} \pi^k_{i j}}{\min_{i, j} \pi^k_{i j}}\right)}.
\end{equation}

\end{theorem}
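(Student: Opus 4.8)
The plan is to split the statement into three independent claims and prove each: the convergence rate of the outer (proximal) iterations, the value of the inner accuracy $\tilde{\e}$, and the per-iteration cost of Sinkhorn's algorithm.

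First, for the outer convergence I would observe that \eqref{prox_for_sinkhorn} is precisely Algorithm~\ref{Alg1_nonadaptive} applied to $f(\pi) = \la C, \pi\ra$ on $Q = \U(p,q)$ with the trivial model $\psi_{\delta}(\pi, \pi^k) = f(\pi) - f(\pi^k)$ and $V[\pi^k](\pi) = KL(\pi | \pi^k)$; as noted just before the statement, this $\psi$ is a $(0, L)$-model for every $L \ge 0$. Hence Theorem~\ref{mainTheoremDL_G} applies verbatim with $\delta = 0$ and gives $\la C, \bar{\pi}^N\ra - \min_{\pi \in \U(p,q)} \la C, \pi\ra \le LR^2/N + \widetilde{\delta}$. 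The only ingredient specific to optimal transport is the bound $R^2 = KL(\pi_* | \pi^0) \le \ln n$, which holds because $\pi^0 = pq^T$ and the KL divergence of any coupling from the product of its marginals (its mutual information) is at most $\ln n$. Choosing $\widetilde{\delta} = \e/2$, the precision demanded in \eqref{prox_for_sinkhorn}, and $N = 4L\ln n / \e$ makes $LR^2/N = \e/4$, so the total error is $3\e/4 \le \e$, giving the first claim.

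Second, and this is the crux, I must show that solving \eqref{prox_for_sinkhorn} to objective residual $\tilde{\e} = \widetilde{O}(\e^4/(Ln^4))$ delivers an iterate meeting the $\widetilde{\delta} = \e/2$ precision of Definition~\ref{solNemirovskiy}, i.e. a small gap $\max_{\pi \in \U(p,q)} \la \nabla\phi_{k+1}(\hat{\pi}), \hat{\pi} - \pi\ra$. The subproblem objective $\phi_{k+1}(\pi) = \la C, \pi\ra + L\cdot KL(\pi | \pi^k)$ is $L$-strongly convex in $\norm{\cdot}_1$ since the negative entropy is $1$-strongly convex (Pinsker's inequality), so $\tfrac{L}{2}\norm{\hat{\pi} - \pi^{k+1}}_1^2 \le \tilde{\e}$, where $\pi^{k+1}$ is the exact minimizer. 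Splitting the gap at $\pi^{k+1}$ and using first-order optimality there, I would bound it by $\norm{\nabla\phi_{k+1}(\pi^{k+1})}_\infty \norm{\hat{\pi} - \pi^{k+1}}_1 + 2\norm{\nabla\phi_{k+1}(\hat{\pi}) - \nabla\phi_{k+1}(\pi^{k+1})}_\infty$. Since $\nabla\phi_{k+1}(\pi)_{ij} = C_{ij} + L(1 + \ln(\pi_{ij}/\pi^k_{ij}))$, the first prefactor is $O(\bar{c}_k)$, while the gradient difference equals $L\max_{ij}|\ln(\hat{\pi}_{ij}/\pi^{k+1}_{ij})|$. This last term is the delicate one, because the logarithm is unbounded near the boundary of the simplex. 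Here I would invoke the rounding step (Algorithm~2 of \cite{altschuler2017near-linear}) together with the entry lower bound $\pi_{ij} \ge \e/(2n^2 \norm{C}_\infty)$ from the footnote to convert $\norm{\cdot}_1$-closeness into multiplicative closeness, yielding a gradient difference of order $\sqrt{L\tilde{\e}}\, n^2\norm{C}_\infty/\e$; requiring the whole gap to be $\le \e/2$ then forces $\tilde{\e} = \widetilde{O}(\e^4/(Ln^4))$, exactly the stated value.

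Third, for the per-iteration cost I would identify the subproblem in \eqref{prox_for_sinkhorn} with an entropy-regularized transport problem with regularization $L$ and effective cost $\tilde{C} := C - L\ln\pi^k$ (entrywise), so that $\pi^k \odot \exp(-C/L) = \exp(-\tilde{C}/L)$ and the range of $\tilde{C}$ is captured by $\bar{c}_k = \norm{C}_\infty + L\ln(\max_{ij}\pi^k_{ij}/\min_{ij}\pi^k_{ij})$ as in \eqref{Cbound}. Each Sinkhorn update is a single matrix--vector multiplication, costing $O(n^2)$, so it remains only to bound the number of updates, and I would combine the two standard regimes: the Franklin--Lorenz/Birkhoff contraction bound, whose rate is governed by $\exp(\bar{c}_k/L)$ and produces the first term $\exp(\bar{c}_k/L)(\bar{c}_k/L + \ln(\bar{c}_k/\tilde{\e}))$, and the sublinear $\widetilde{O}(\norm{\tilde{C}}_\infty^2/(L\tilde{\e}))$ marginal-violation bound (cf. \cite{dvurechensky2018computational,altschuler2017near-linear}), producing the second term $\bar{c}_k^2/(L\tilde{\e})$; taking the smaller of the two gives the minimum in \eqref{IterationCost}. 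The refinement over \cite{dvurechensky2018computational} removing the dependence on $p, q$ comes from initializing the dual variables at $u^0 = \ln p$, $v^0 = \ln q$ and measuring the spread of the potentials through $\bar{c}_k$ rather than through $\min_{ij}(p_i q_j)$. The main obstacle throughout is the middle step: the gap of Definition~\ref{solNemirovskiy} is controlled by an $\ell_\infty$ gradient norm containing $\ln\pi_{ij}$, which blows up at the boundary, so converting Sinkhorn's objective-residual guarantee into the required gap guarantee is not a routine strong-convexity estimate and genuinely relies on the a priori lower bound on the entries enforced by rounding.
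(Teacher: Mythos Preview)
Your proposal is correct and follows essentially the same three-part decomposition as the paper: outer convergence via Theorem~\ref{mainTheoremDL_G}, translation of objective residual $\tilde\e$ into $\tilde\delta$-`precision' via strong convexity of $\phi_{k+1}$ together with an entry lower bound to tame $\ln\pi_{ij}$, and the two Sinkhorn regimes (Franklin--Lorenz contraction and the sublinear bound of \cite{dvurechensky2018computational}) for the per-iteration cost. The only cosmetic differences are that the paper uses the cruder bound $V[\pi^0](\pi_*)\le \ln n^2$ (yours via mutual information is tighter but leads to the same $N$), and that the paper packages the middle step as an application of the appendix Theorem~\ref{lm:delta_eps} on a polytope modified by adding the constraints $\pi_{ij}\ge \e/(4n^2)$ (then undone by an affine change of variables), whereas you enforce the same entry lower bound by rounding---both devices serve the identical purpose of making the gradient Lipschitz on the relevant region.
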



\begin{proof}
The estimate for the number of iterations $N$ follows from Theorem~\ref{mainTheoremDL_G} since $V[\pi_0](\pi_*) \le \ln n^2$ as $\pi \in S_{n^2}(1)$. The first component of~\eqref{IterationCost} is proved in \cite{franklin1989scaling}, and the second component basically follows from \cite{beck2015convergence,dvurechensky2018computational}. Proofs of the second component and bound on $\bar{c}_k$~\eqref{Cbound} are provided in Appendix~\ref{sec:Sinkhorn} (Theorem~\ref{thm:Sinkhorn_complexity}). 
Let us show that it is sufficient to solve minimization problem  \eqref{prox_for_sinkhorn} on each iteration with accuracy $\tilde{\e} = \widetilde O(\ak{\e^4 / (L n^4)})$ in terms of the objective residual to guarantee $\tilde{\delta} = \e / 2$ accuracy in terms of Definition~\ref{def_precision}.

To prove this fact, we use relation \eqref{inexact} in Theorem~\ref{lm:delta_eps} of Appendix~\ref{InexactSolutions} with $\|\cdot\| = \|\cdot\|_1$, $\tilde{R} = 2$, $\mu = L$. To bound $\Delta = \tilde{L}\widetilde{R} + \|\nabla\phi(\tilde{x}^*)\|_*$ (in notations of Theorem~\ref{lm:delta_eps}) we modify  $\mathcal{U}(p,q)$  by adding constraints: $\pi_{ij} \ge \e / (4 \ak{n^2})$, $i,j=1,...,n$. 
The solution of the changed problem is still an $O(\e)$-solution of the original problem.
For the modified problem $\Delta = 5 L \ak{n^2} \tilde{R} / \e$. According to \eqref{inexact}  one should solve auxiliary problem with accuracy by function value $\widetilde{\e}$, which is chosen such that $\e/2 = \tilde{\delta} = (5L \ak{n^2}/\e) \tilde{R} \sqrt{2\widetilde{\e}/L}$. The only problem is that now we cannot directly apply Sinkhorn's algorithm. This problem can be solved by trivial affine transformation of $\pi$-space. This transformation reduces modified polyhedral to the standard one and we can use Sinkhorn's Algorithm. Such a transformation doesn't change (in terms of $O(~)$) the requirements to the accuracy.
\end{proof}


\piccaption[]{Adaptive choice of L}
\parpic[r]{
    \vspace{-20pt}
    \centering
    \includegraphics[width=0.4\textwidth]{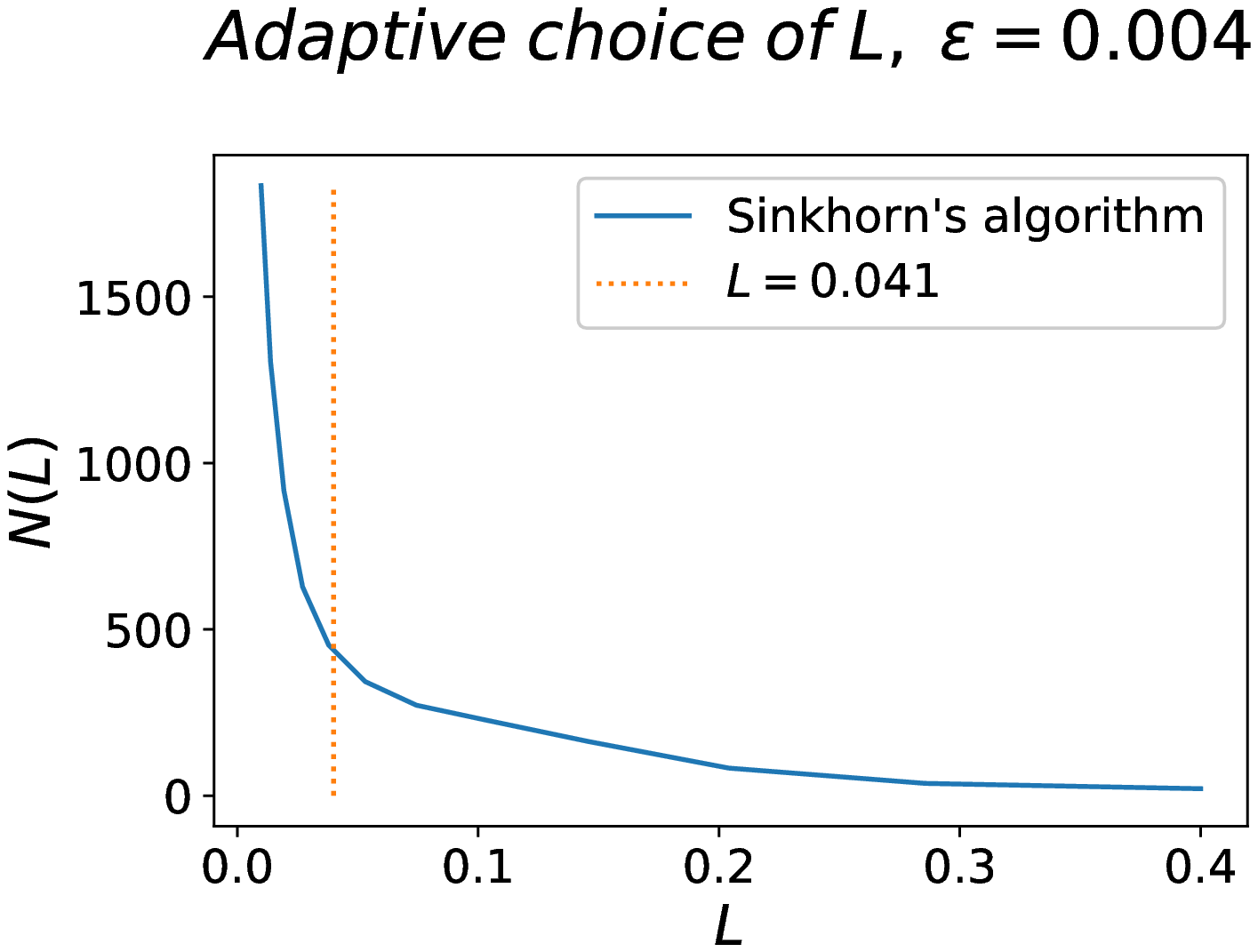}
    \vspace{-10pt}
    \label{fig:gammachoosing}
    \vspace{-10pt}
    }
\begin{remark}\label{ChoiceL}
The standard Sinkhorn's method can be seen as a particular case of our algorithm \eqref{prox_for_sinkhorn} with only one step. To obtain an $\e$-approximate solution of \eqref{wass_dist}, the regularization parameter $L$ needs to be chosen $O\left(\e / \ln n\right)$ \cite{altschuler2017near-linear,dvurechensky2018computational,gasnikov2015universal}. This can lead to instability of the Sinkhorn's algorithm \cite{schmitzer2016stabilized}. On the opposite, our Proximal Sinkhorn algorithm allows to run Sinkhorn's algorithm with larger regularization parameter. This parameter can be chosen by minimization of the theoretical bound \eqref{IterationCost}, which gives $L = \widetilde{O}(\|C\|_{\infty})$. 
In practice one can choose this constant adaptively since we have a $(\delta,L)$-model for any $L$ and can vary $L$ from iteration to iteration. First, the inner problem \eqref{prox_for_sinkhorn} is solved with overestimated $L$. Then, we set $L := L/2$ and the problem is solved with the updated value of the parameter and so on until a significant increase (e.g. 10 times) in the complexity of the auxiliary entropy-linear programming problem in comparison with the initial complexity is detected, see Figure~\hyperref[fig:gammachoosing]{1}, where $N(L)$ is a number of required iterations of Sinkhorn algorithm to solve the inner problem with accuracy $\e$. 
\end{remark}

From the Theorem~\ref{Th:ProxSinkhorn} and Remark~\ref{ChoiceL} one can roughly estimate the total complexity of Proximal Sinkhorn algorithm as\footnote{Our experiments on MNIST data set show (see Figures~\ref{fig:sinkhorn_iter},~\ref{fig:sinkhorn_time},~\ref{fig:sinkhorn_inner}) that in practice the bound is better.} 
$\ak{\widetilde{O}(n^4 / \e^2)}$.
We also mention several recent complexity bounds\footnote{Strictly speaking for the moment we can not verify all the details of the proof of estimate $\tilde{O}(n^2/\e)$. Also the proposed in \cite{blanchet2018towards,quanrud2018approximating} methods are mainly theoretical, like Lee--Sidford's method for OT problem with the complexity $\tilde{O}(n^{2.5})$ \cite{lee2014path}. For the moment it is hardly possible to implement these methods such that theirs practical efficiencies correspond to the theoretical ones.} 
for the OT problem $\tilde{O}(n^2/\e^3)$ \cite{altschuler2017near-linear}, $\tilde{O}(n^2/\e^2)$  and $\tilde{O}(n^{2.5}/\e)$ \cite{dvurechensky2018computational}, $\tilde{O}(n^2/\e)$ \cite{blanchet2018towards,quanrud2018approximating}, $\tilde{O}(n/\e^{3+d})$, $d\ge 1$ \cite{altschuler2018approximating}.

\subsection{\pd{Numerical Illustration}}
\pd{In this subsection we provide numerical illustration of the Proximal Sinkhorn algorithm.\footnote{The code is available at \url{https://github.com/dmivilensky/Proximal-Sinkhorn-algorithm}}}
In the experiments we use a standard MNIST dataset with images scaled to a size $10 \times 10$. The vectors $p$ and $q$ contain the pixel intensities of the first and second images respectively. The value of $c_{ij}$ is equal to the Euclidean distance between the $i$-th pixel from the vector $p$ and the $j$-th pixel from the vector $q$ on the image pixel grid. For experiments with varying number of pixels $n$ the images are resized to be images of $10\cdot m \times 10\cdot m$ pixels, where $m \in \mathbb{N}$. We replace all the zero elements in $p$ and $q$ with $10^{-3}$ and, then, normalize these vectors.
\begin{figure}[H]
    \centering
    \vspace{-2em}
    \includegraphics[width=0.86\textwidth]{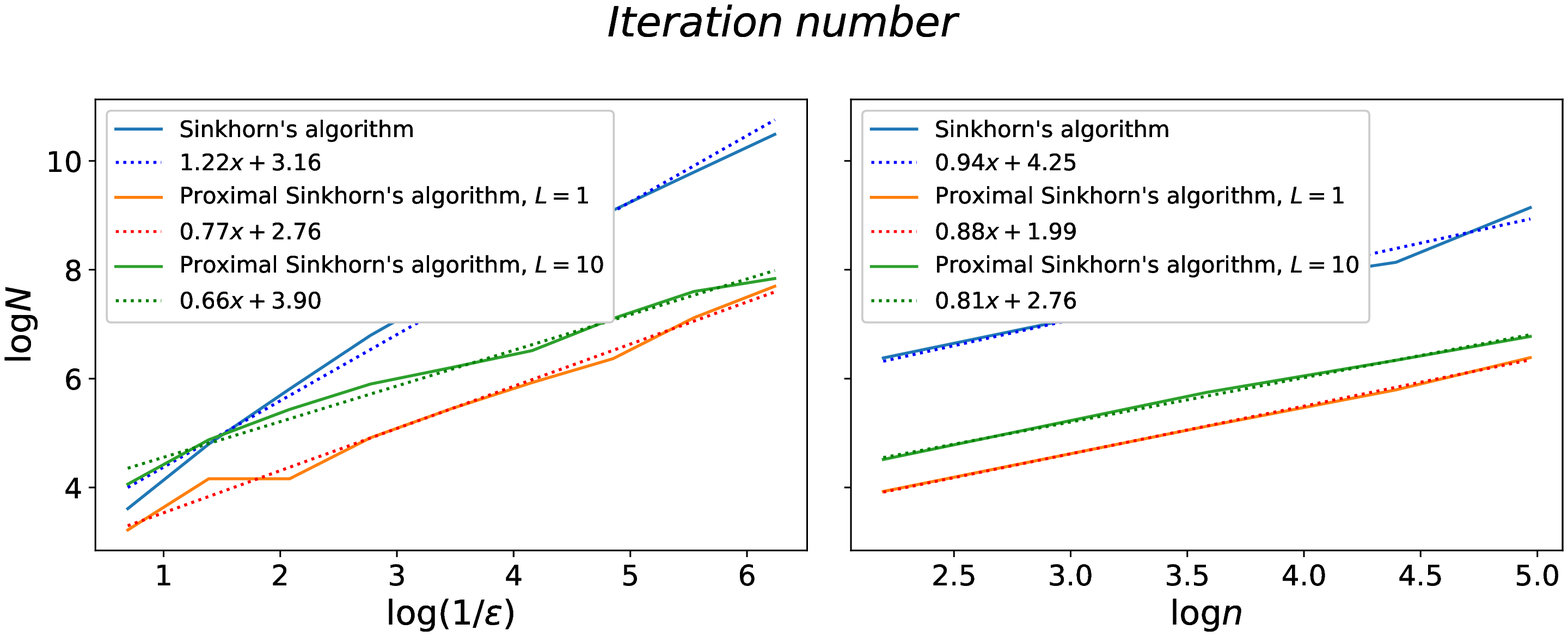}
    \caption{Comparison of iteration number of Sinkhorn's algorithm and total number of Sinkhorn steps in Proximal Sinkhorn's algorithm for different $L$. }
    \label{fig:sinkhorn_iter}
\end{figure}
Fig. \ref{fig:sinkhorn_iter} shows that the growth rate of the iteration number with increasing accuracy or size of the problem for the Sinkhorn's algorithm is greater than for the Proximal Sinkhorn's method. At the same time, with a higher value of $L$ in proximal method, the iteration number is greater, and the growth rates with some precision are equal. The same type of dependence on the accuracy and the size of the problem can be seen for the working time (fig. \ref{fig:sinkhorn_time}):

\begin{figure}[H]
    \centering
    \vspace{-2em}
    \includegraphics[width=0.86\textwidth]{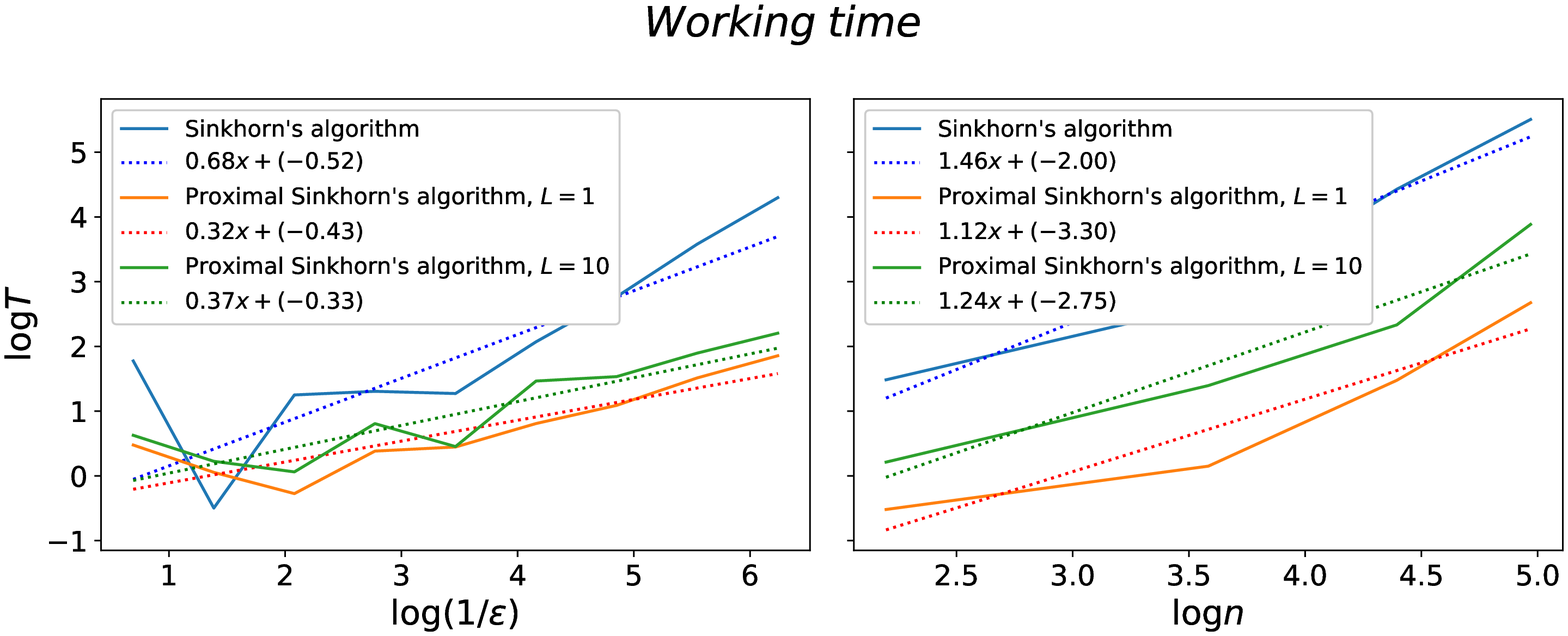}
    \caption{Comparison of working time of Sinkhorn's algorithm and Proximal Sinkhorn's algorithm with different $L$. }
    \label{fig:sinkhorn_time}
\end{figure}


\section{Proximal IBP Algorithm for Wasserstein Barycenter}
\label{sec:Prox_IBP}
In this section we consider a more complicated problem of approximating an OT barycenter. OT barycenter is a natural definition of a mean in a space endowed with an OT distance. Such barycenters are used in the analysis of data with geometric structure, e.g. images, and other machine learning applications \cite{cuturi2014fast,benamou2015iterative,kroshnin2019complexity,peyre2017computational,kroshnin2019statistical}.
For a set of probability measures $\{p_1, \dots, p_m\}$, cost matrices $C_1, \dots, C_m \in \R_+^{n \times n}$, and $w \in S_n(1)$, the weighted barycenter of these measures is defined as a solution of the following convex optimization problem 
\begin{equation}\label{prob:unreg_bary}
    \sum_{l=1}^m w_l \min_{\pi_l \in \U(p_l,q)} \la C_l,  \pi_l \ra \rightarrow \min_{q \in S_n(1)}  \Leftrightarrow \sum_{l=1}^m w_l \la C_l,  \pi_l \ra \rightarrow \min_{\Bpi \in \C_1 \cap \C_2},
\end{equation}
\begin{align}
    \C_1 = \left\{\Bpi = [\pi_1, \dots, \pi_m] : \forall l ~ \pi_l \one = p_l \right\},  \;\;\;
    \C_2 &= \left\{\Bpi = [\pi_1, \dots, \pi_m] :   \pi_1^T \one =  \dots = \pi_m^T \one \right\}.
\end{align}

The idea is similar to the one in Sect. \ref{OT_PS}, namely, we use our framework to define a Proximal Iterative Bregman Projections algorithm.
The algorithm starts from the point $\Bpi$ s.t. $\pi^0_l = \ak{\frac{1}{n} p_l \one^T \in \mathcal{U}(p_l, \one / n)}$, $l=1,...,m$ and iterates
\begin{align}\label{algo:prox_IBP}
     \Bpi^{k+1} &=  {\arg\min_{ \Bpi \in \C_1 \cap \C_2}}^{\e/2} \sum_{l=1}^m {w_l}\left\{ \la C_l, \pi_l  \ra + L \cdot KL(\pi_l | \pi_l^k)\right\}\notag\\
    &= {\arg\min_{\Bpi \in \C_1 \cap \C_2}}^{\e/2} \sum_{l=1}^m {w_l} KL\left(\pi_l \left| \pi_l^k \odot \exp\left(-\frac{C_l}{L}\right)\right.\right).
\end{align}
These iterations are called outer iterations and on each such iteration, the Iterative Bregman Projections algorithm \cite{benamou2015iterative} listed as Algorithm \ref{Alg:dual_IBP} below is used to solve the auxiliary minimization problem.

\begin{algorithm}[H]
    \caption{Iterative Bregman Projection }
    \label{Alg:dual_IBP}
    \begin{algorithmic}[1]
        \REQUIRE $C_1, \dots, C_m$, $p_1, \dots, p_m$, $L > 0$, $\tilde{\e} > 0$
        \STATE $u_l^0 := 0$, $v_l^0 := 0$, $K_l := \exp\left(-\tfrac{C_l}{L}\right)$, $l = 1, \dots, m$ 
        \REPEAT \STATE 
                $v_l^{t + 1} := \sum_{k = 1}^m w_k \ln K_k^T e^{u_k^t} - \ln K_l^T e^{u_l^t}, \quad
                \u^{t + 1} := \u^t$ 
                \STATE
                $t := t + 1$
                \STATE  $u^{t + 1}_l := \ln p_l - \ln K_l e^{v_l^t}, \quad 
                \v^{t + 1} := \v^t$ 
                \STATE $t := t + 1$
        \UNTIL{$\sum_{l = 1}^m w_l \norm{B_l^T(u_l^t, v_l^t) \one - \bar{q}^t}_1 \le \frac{\tilde{\e}}{4\max_l \norm{C_l}_\infty}$, where $B_l(u_l, v_l)=\diag\left(e^{u_l}\right) K_l \diag\left(e^{v_l}\right)$, $\bar{q}^t := \sum_{l = 1}^m w_l B_l^T(u_l^t, v_l^t) \one$}
         \STATE {$q := \tfrac{1}{\sum_{l = 1}^m w_l \la \one, B_l \one \ra} \sum_{l = 1}^m w_l B_l^T \one$}
         \STATE {Calculate $\hat{\pi}_1, \dots, \hat{\pi}_m$ by Algorithm~2 from~\cite{altschuler2017near-linear} s.t.\\
        $\hat{\pi}_l \in \U(p_l, q)$, $
            \norm{\hat{\pi}_l - B_l}_1 \le \norm{B_l \one - p_l}_1 + \norm{B_l^T \one - q}_1$.}
        \ENSURE {$q$, $\hat{\Bpi} = [\hat{\pi}_1, \dots, \hat{\pi}_m]$.}
    \end{algorithmic}
\end{algorithm}

\begin{theorem}\label{Th:ProxIBP}
Let $\bar \Bpi^N = \frac{1}{N}\sum_{k=1}^N\Bpi^k$, where $\Bpi^k$ are the iterates of \eqref{algo:prox_IBP}.
Then, after $N = \frac{4Lm\ln n}{\e}$ iterations, it holds that 

$$ \sum_{l=1}^m w_l \la C_l, \bar \pi_l^N \ra   \leq \min_{\Bpi \in \C_1 \cap \C_2} \sum_{l=1}^m w_l \la C_l,  \pi_l \ra   + \e.$$
Moreover, the accuracy $\tilde{\e}$ for the solution of \eqref{algo:prox_IBP}  is sufficient to be set as $\tilde{\e} = \widetilde O(\e^2 / (m n^3))$ and the complexity of IBP on $k$-th iteration is bounded as
\begin{gather}\label{IterationCost_IBP}
    m n^2 \widetilde{O}\left(\min\Bigg\{\exp{\left(\frac{\bar{c}_k}{L}\right) \ln\frac{\bar{c}_k}{\tilde{\e}}}, \frac{\bar{c}_k^2}{L \tilde{\e}}\Bigg\} \right), \\
    \ak{\bar{c}_k = O\left(\max_{l=1,...,m} \left[\norm{C_l}_\infty + L \ln\left(\frac{\max_{i, j} [\pi^k_l]_{i j}}{\min_{i, j} [\pi^k_l]_{i j}}\right)\right]\right)}.
\end{gather}
\end{theorem}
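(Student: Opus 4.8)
The plan is to mirror the proof of Theorem~\ref{Th:ProxSinkhorn}, since problem~\eqref{prob:unreg_bary} fits our framework with the exact model $\psi_\delta(\Bpi,\Bpi') = \sum_{l=1}^m w_l\left(\la C_l,\pi_l\ra - \la C_l,\pi_l'\ra\right)$, which --- as observed in Section~\ref{OT_PS} --- is a $(0,L)$-model of the objective for every $L \ge 0$ with respect to the Bregman divergence $V[\Bpi'](\Bpi) = \sum_{l=1}^m w_l\, KL(\pi_l|\pi_l')$ built from the entropy prox-function. With this identification, iteration~\eqref{algo:prox_IBP} is exactly one inexact Bregman proximal step~\eqref{prox_meth} of Algorithm~\ref{Alg1_nonadaptive} on the feasible set $Q = \C_1\cap\C_2$, solved to inner accuracy $\widetilde\delta = \e/2$. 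The proof then splits into the three parts of Theorem~\ref{Th:ProxSinkhorn}: the outer iteration count, the translation of inner accuracy, and the per-iteration complexity.

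First I would establish the outer iteration count via Theorem~\ref{mainTheoremDL_G} with $\delta = 0$ and $\widetilde\delta = \e/2$, which gives outer error at most $LR^2/N + \e/2$ with $R^2 = V[\Bpi^0](\Bpi_*)$. The remaining task is to bound $R^2$. Since each plan lies on the simplex $S_{n^2}(1)$ and $\pi_l^0 = \frac1n p_l\one^T$, a direct computation yields $KL(\pi_{l,*}|\pi_l^0) = -H(\pi_{l,*}) + H(p_l) + \ln n \le 2\ln n$, using that the marginal entropy satisfies $H(p_l)\le\ln n$. Bounding $w_l \le 1$ and summing the $m$ contributions gives $R^2 \le 2m\ln n$, so that the choice $N = \frac{4Lm\ln n}{\e}$ forces $LR^2/N \le \e/2$ and the total error is at most $\e$; here the factor $m$ arises precisely from aggregating the $m$ per-plan divergences.

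Next I would translate the inner objective-residual accuracy into the $\widetilde\delta$-precision of Definition~\ref{def_precision}, exactly as in the Sinkhorn case, using relation~\eqref{inexact} of Theorem~\ref{lm:delta_eps} with $\|\cdot\| = \|\cdot\|_1$, $\widetilde R = 2$, and $\mu = L$; the quantity to control is $\Delta = \widetilde L\,\widetilde R + \|\nabla\phi(\tilde{x}_*)\|_*$. I would modify $\C_1\cap\C_2$ by imposing lower bounds $[\pi_l]_{ij}\ge \e/\mathrm{poly}(n)$, which keeps the solution an $O(\e)$-solution of the original barycenter problem while making $\Delta$ polynomially bounded, then reconcile $\e/2 = \widetilde\delta$ with the function-value accuracy to obtain the stated $\widetilde\e = \widetilde O(\e^2/(mn^3))$, and finally apply an affine change of variables reducing the modified polytope to the standard coupled marginal constraints so that Algorithm~\ref{Alg:dual_IBP} still applies. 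This is the main obstacle: unlike the single transport polytope $\U(p,q)$ of the Sinkhorn case, here the feasible set is the intersection $\C_1\cap\C_2$ of two coupled affine families, so bounding $\Delta$ and constructing the variable change that preserves IBP-solvability are more delicate and must respect both marginal constraints simultaneously.

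Finally, the per-iteration complexity~\eqref{IterationCost_IBP} follows from known convergence guarantees for Iterative Bregman Projections~\cite{benamou2015iterative}: the first term of the minimum comes from a linear-rate analysis in the spirit of~\cite{franklin1989scaling,kroshnin2019complexity}, the second from a sublinear analysis \'a la~\cite{beck2015convergence,dvurechensky2018computational}, while the per-step arithmetic cost $m n^2$ reflects the $m$ plans of size $n\times n$ updated by Algorithm~\ref{Alg:dual_IBP}. The bound on $\bar c_k$ in~\eqref{IterationCost_IBP} is obtained, as in~\eqref{Cbound}, from the ratio of the largest to smallest entries of the current plans $\pi_l^k$, maximized over $l$. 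I expect these pieces to assemble in an appendix analogue of the Sinkhorn complexity theorem, with the barycenter-specific accounting carried out as in~\cite{kroshnin2019complexity}.
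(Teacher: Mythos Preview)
Your proposal is correct and follows exactly the approach the paper itself indicates: the paper's entire proof of Theorem~\ref{Th:ProxIBP} is the single sentence ``The proof of Theorem~\ref{Th:ProxIBP} is based on Theorem~\ref{mainTheoremDL_G} and \cite{kroshnin2019complexity}. All the remarks from Section~\ref{OT_PS} for Proximal Sinkhorn algorithm also hold for Proximal IBP,'' and your write-up is precisely the natural unpacking of that pointer. One small remark: since $\sum_l w_l = 1$, the weighted divergence actually satisfies $R^2 = \sum_l w_l\, KL(\pi_{l,*}\,|\,\pi_l^0) \le 2\ln n$ without the factor $m$, so your step ``bounding $w_l\le 1$ and summing'' is a valid but gratuitously loose upper bound---it happens to match the $m$ appearing in the stated iteration count, but the factor is not forced by the divergence estimate itself.
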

The proof of Theorem~\ref{Th:ProxIBP} is based on Theorem~\ref{mainTheoremDL_G} and \cite{kroshnin2019complexity}. All the remarks from Section~\ref{OT_PS} for Proximal Sinkhorn algorithm also hold for Proximal IBP. 
In \cite{kroshnin2019complexity} it was shown that complexity of IBP is $\widetilde{O} \left(n^2 / \varepsilon^2\right)$. Despite the theoretical complexity of Proximal IBP is worse than this bound, we show in the next section that in practice Proximal IBP beats the standard IBP algorithm. As an alternative to the IBP algorithm we mention primal-dual accelerated gradient descent \cite{dvurechensky2018decentralize,uribe2018distributed}.
\subsection{Numerical Illustration}

In this section, we present preliminary computational results for the numerical performance analysis of the Proximal Iterative Bregman Projection (ProxIBP) method discussed above asthe iterates~\eqref{algo:prox_IBP}.

Initially, we show the results for the computation of a non-regularized Wasserstein barycenter of a set of $10$ truncated Gaussian distributions with finite support. For the finite support $x=[-5,-4.9,-4.8,\hdots,-0.1,0,0.1,\dots,4.8,4.9,5]$, we set the finite distribution $p_l$ such that $p_l(i) = \mathcal{N}(x_i;\mu_i,\sigma_i)$, that is, the value at coordinate $i$ of the distribution $p_l$, for $1\leq l \leq m$, is the value of the Normal distribution with mean $\mu_i$ and standard deviation $\sigma_i$. The values $\{\mu_i\} \sim \text{Uniform}[-5,5]$, are uniformly chosen in the line segment $[-5,5]$, and the values are selected as $\{\mu_i\} \sim \text{Uniform}[0.25,1.25]$. For simplicity of exposition, we select uniform weighting for all distributions, i.e., $w_l = 1/m$.


Figure~\ref{fig:results_Guassians} shows the numerical results for a number of comparative scenarios between the Iterative Bregman Projection (IBP) algorithm proposed in \cite{benamou2015iterative} and its Proximal variant in~\eqref{algo:prox_IBP}. For both algorithms, we show the function values achieved by the generated iterates, and the final approximated barycenter. The results for the IBP algorithm are shown in Figure~\ref{fig:results_Guassians}(a) and Figure~\ref{fig:results_Guassians}(b). Figure~\ref{fig:results_Guassians}(a) shows the weighted distance between the generated barycenter and the original distributions for three different desired accuracy values. 
It is clear that a bigger $\e$ generates a faster convergence, but the final cost is slightly higher than in other cases. Figure~\ref{fig:results_Guassians}(b) shows the resulting barycenter for the three values of the accuracy parameter. For higher accuracy, the effects of the regularization constant are smaller and thus we obtain a ``spikier'' barycenter. Figure~\ref{fig:results_Guassians}(c) and Figure~\ref{fig:results_Guassians}(d) shows a similar analysis for the proposed Proximal IBP in~\eqref{algo:prox_IBP}, in Figure~\ref{fig:results_Guassians}(c) we observe the function value of the generated barycenter, for a fixed number of inner loop iterations, and changing values of $L$, note that here $L$ is not a regularization parameter but the weight on the Bregman function. For larger values of $L$, the inner loop problem is easier to solve, requires less iterations to achieve certain accuracy, with the price in a larger number of iterations in the outer loop. For the particular problem studied, $200$ iterations in the outer loop are sufficient to achieve good performance even with relavively smaller values of $L$. Figure~\ref{fig:results_Guassians}(c) shows the generated barycenters for the Proximal IBP algorithm. Finally, Figure~\ref{fig:results_Guassians}(e) and Figure~\ref{fig:results_Guassians}(f) show the results, for the analogous adaptive stopping condition described in Line $11$ of Algorithm~\ref{Alg:Sinkhorn} with $\e = 1\cdot 10^{-10}$. We test two different values of the parameter $L$, namely $1$ and $0.1$. Additionally, we explore the suggested adaptive search procedure, where one decreases the value of the parameter $L$ at each iteration, until the inner problem has become particularly hard to solve. This last approach is shown a fast convergence as it reaches a comparable value in around $10$ iterations. Figure~\ref{fig:results_Guassians}(f) shows the resulting barycenters.

\begin{figure}
	\centering
	\subfigure[{}]{\includegraphics[origin=c,width=0.43\textwidth]{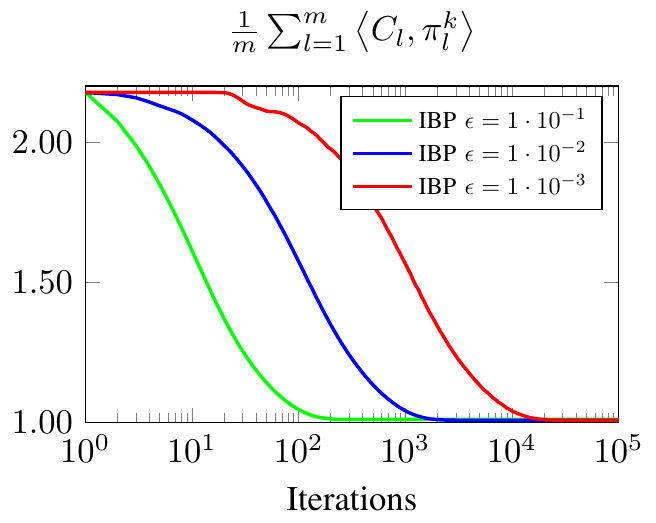}}
	\subfigure[{}]{\includegraphics[origin=c,width=0.42\textwidth]{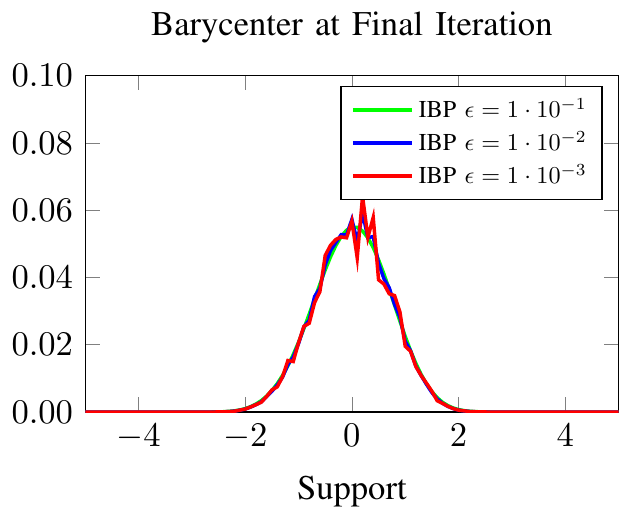}}
	\subfigure[{}]{\includegraphics[origin=c,width=0.43\textwidth]{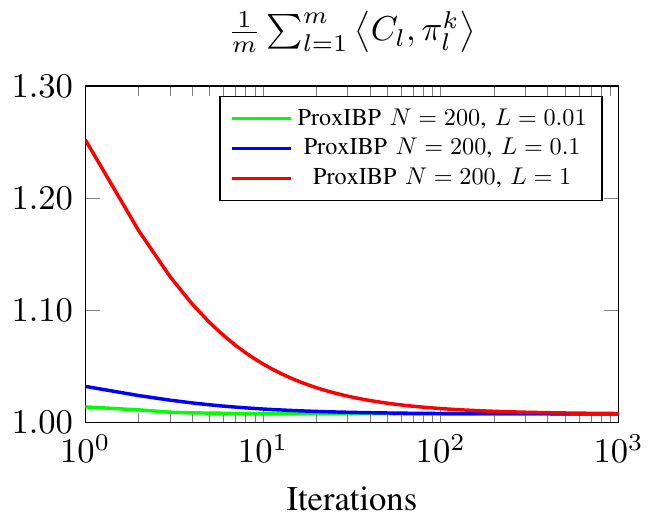}}
	\subfigure[{}]{\includegraphics[origin=c,width=0.42\textwidth]{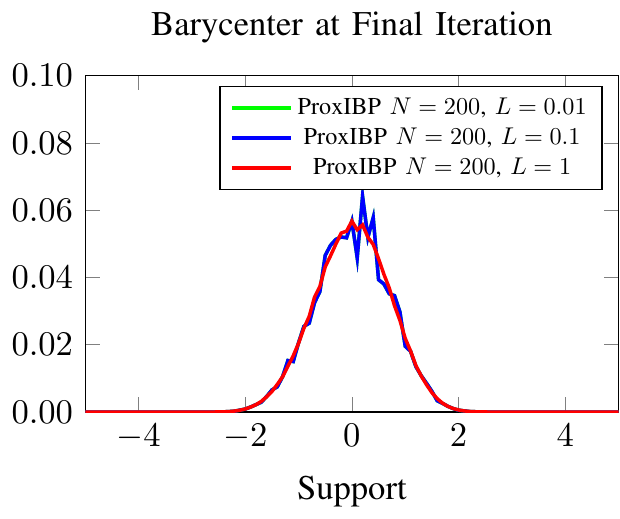}}
	\subfigure[{}]{\includegraphics[origin=c,width=0.43\textwidth]{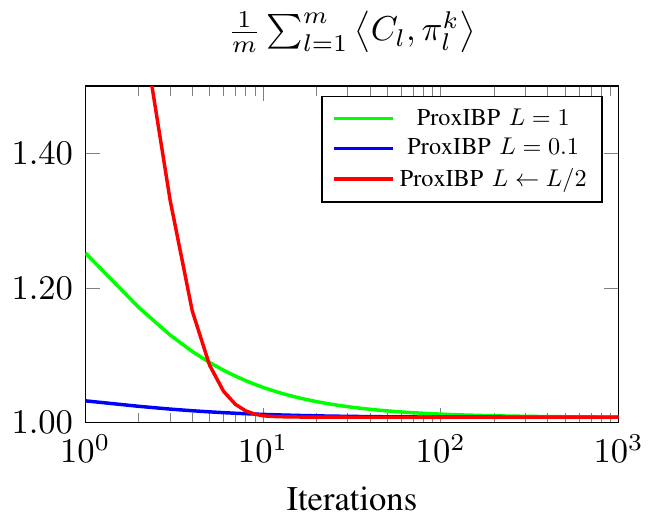}}
	\subfigure[{}]{\includegraphics[origin=c,width=0.42\textwidth]{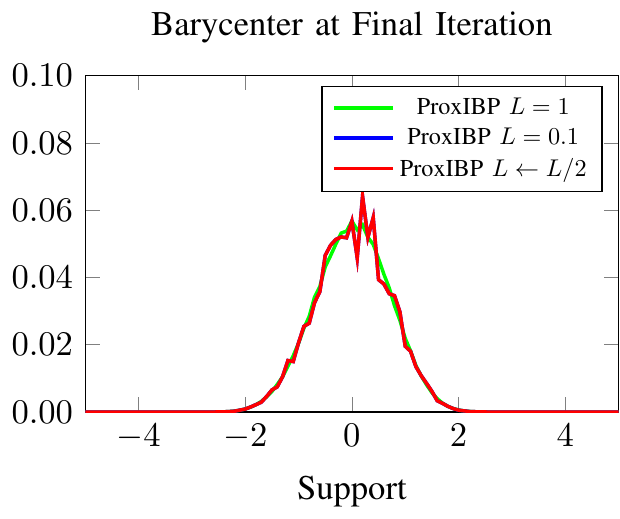}}
	\caption{Numerical results for the computation of the barycenter of $10$ truncated Gaussian random variables with finite support for the IBP Algorithm and the Proximal IBP algorithm. Both function value and final resulting barycenter are shown for an number of simulation scenarios.}
	\label{fig:results_Guassians}
\end{figure}

Figure~\ref{fig:results_seven} shows the result of applying the proximal IBP algorithm to the computation of the barycenter of $20$ images of the number $7$ from the  MNIST dataset \cite{lecun1998mnist}. As shown in Figure~\ref{fig:results_seven}(b), the Euclidean mean among the images does not preserve the geometric properties of the images, making the optimal transport distances suitable for this particular problem. Figure~\ref{fig:results_seven}(a) shows, from left to right, and top to bottom, the sequence of generated barycenters for some of the initial iterations of the algorithm. It is evident, that the generated barycenter maintains the geometric structure of the images, as the barycenter is itself a prototypical image of the number $7$. Figure~\ref{fig:results_seven}(c) shows the average distance of the generated barycenter for the first $400$ iterations of the algorithm.

\begin{figure}
	\centering
	\vspace{-3em}
	\subfigure[Iterates]{\includegraphics[origin=c,width=0.35\textwidth]{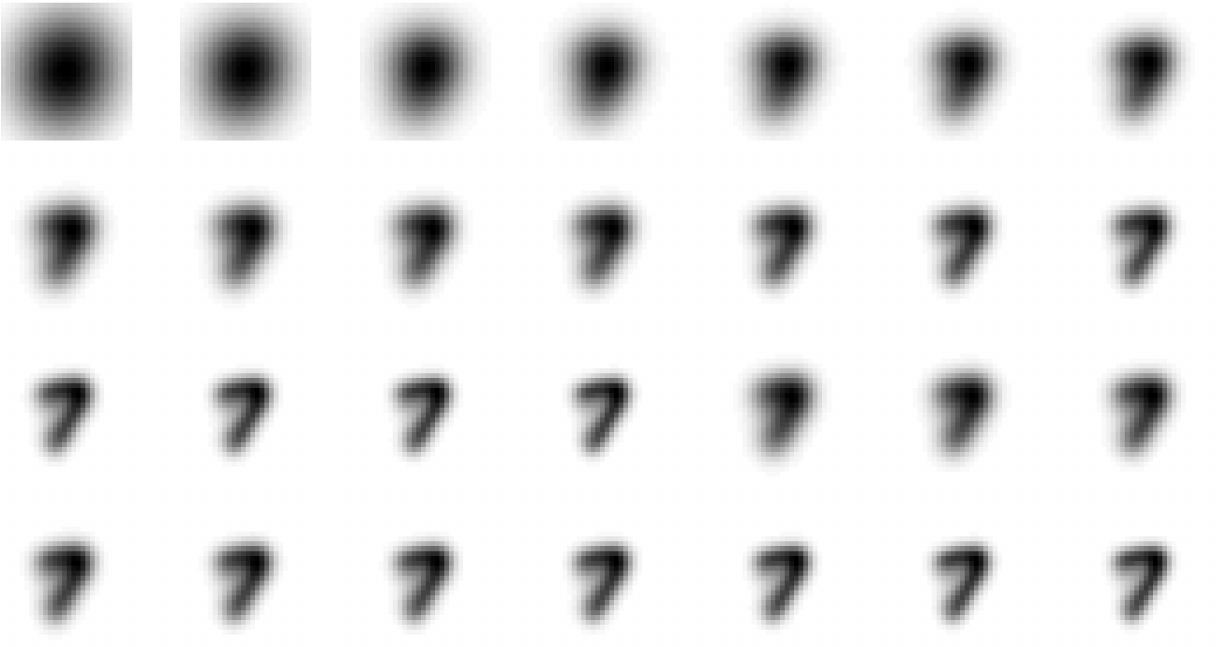}}
	\subfigure[Euclidean Mean]{\includegraphics[origin=c,width=0.30\textwidth]{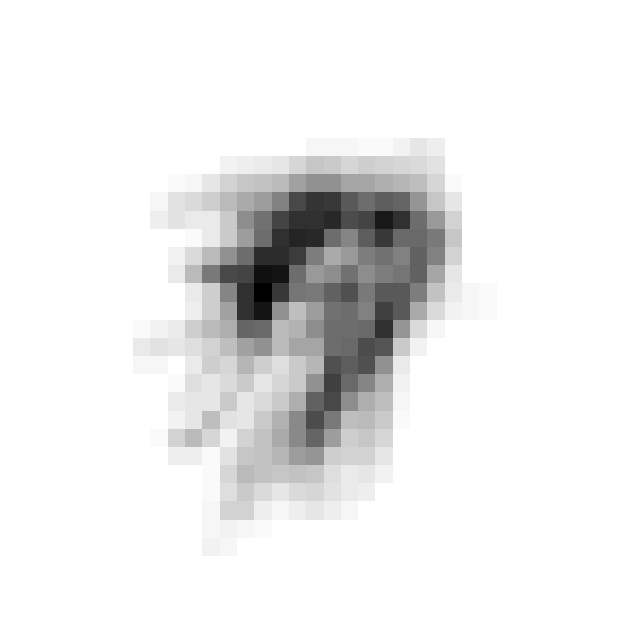}}
	\subfigure[Function Value]{\includegraphics[origin=c,width=0.3\textwidth]{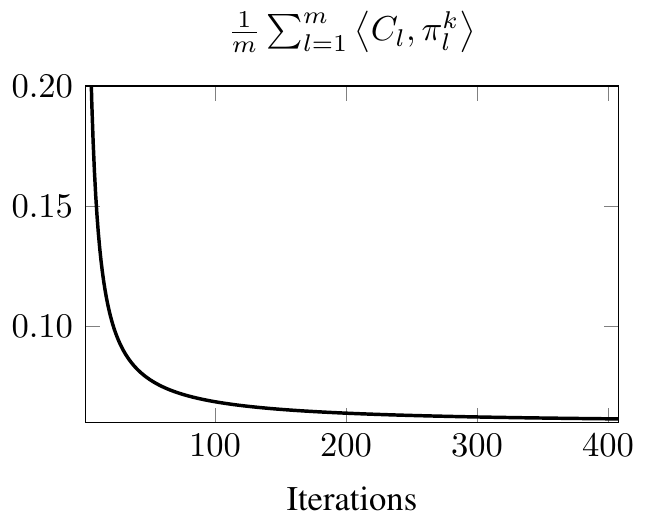}}
	\caption{Computation of the barycenter of a set of $20$ images of the number $7$ from the MNIST dataset \cite{lecun1998mnist}. }
	\label{fig:results_seven}
\end{figure}

Figure~\ref{fig:results_weights} shows the influence of the weights in the computation of the Wasserstain barycenter of four images via the Prox IBP algorithm. We have used four images corresponding to the digits $0$, $1$, $2$, and $3$, and positioned each one of them at the corner of a square. Each of the images shown inside the square corresponds to the obtained barycenter whith weights proportional to the distance to each of the corners. For example, the upper edge of the square assumes zero weights to the digits $2$ and $3$ and only shows the changes in the weights between $0$ and $1$. The center image corresponds to equal weights to all images. The other images are generated similarly.

\begin{figure}
	\centering
	\includegraphics[origin=c,width=0.4\textwidth]{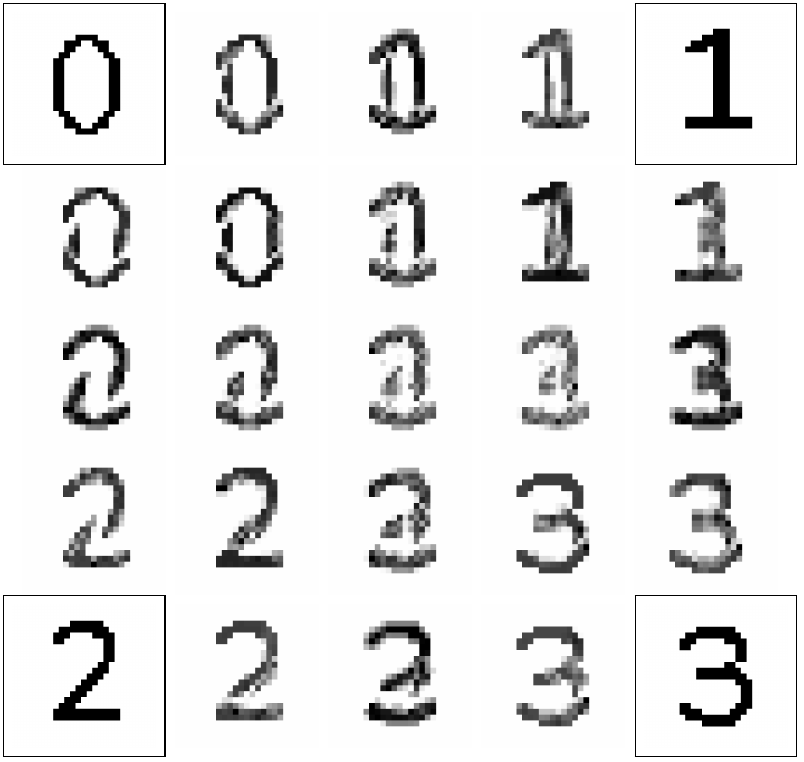}
	\caption{Approximate barycenter computed by the Prox IBP algorithm, for different weighting combinations between four original images (marked by the black boxes).  }
	\label{fig:results_weights}
\end{figure}

\textbf{Acknowledgments.} 
The work in sections \ref{OT_PS} and \ref{sec:Prox_IBP} was funded by Russian Science Foundation (project 18-71-10108). The work in section \ref{S:clustering} was supported by RFBR 18-31-20005 mol$\_$a$\_$ved.
The work of D.~Dvinskikh and D.~Pasechnyk partially conducted in Sochi-Sirius, July, 2018.

\bibliography{PD_references}

\appendix

\section{Adaptive gradient method and proof of Theorem \ref{mainTheoremDL_G}}\label{ProofGM}

\begin{algorithm}
\caption{Adaptive gradient method with inexact model of the objective}
\label{Alg1}
\begin{algorithmic}[1]
\STATE \textbf{Input:} $x_0$ is the starting point, $L_{0} > 0$ and 
$\delta>0$.
\STATE Set
 $S_0 := 0$
\FOR{$k \geq 0$}
\STATE Find the smallest $i_k\geq 0$ such that
\begin{equation}\label{exitLDL_G}
f(x_{k+1}) \leq f(x_{k}) + \psi_{\delta}(x_{k+1}, x_{k}) +L_{k+1}V[x_{k}](x_{k+1}) + \delta,
\end{equation}
where $L_{k+1} = 2^{i_k-1}L_k$, $S_{k+1} := S_k + \frac{1}{L_{k+1}}$.
\begin{equation}\label{equmir2DL_G}
\phi_{k+1}(x) := \psi_{\delta}(x, x_k)+L_{k+1}V[x_k](x), \quad
x_{k+1} := {\arg\min_{x \in Q}}^{\widetilde{\delta}} \phi_{k+1}(x).
\end{equation}
\ENDFOR
\ENSURE 
$\bar{x}_N= \frac{1}{S_N}\sum_{k=0}^{N-1}\frac{x_{k+1}}{L_{k+1}}$
\end{algorithmic}
\end{algorithm}

\def\at2#1{{\color{black}#1}}
\newcommand{\leqarg}[1]{\ensuremath{\stackrel{\text{#1}}{\leq}}}
\newcommand{\eqarg}[1]{\ensuremath{\stackrel{\text{#1}}{=}}}

Before we prove the theorem we should note that that we can reduce Algorithm~\ref{Alg1} to Algorithm~\ref{Alg1_nonadaptive}. Indeed, let us always choose $L_{k+1} = L$ instead of $L_{k+1} = 2^{i_k-1}L_k$ in Algorithm~\ref{Alg1}. In this case it is guaranteed that we exit the inner loop in  Algorithm~\ref{Alg1} after the first step due to $(\delta, L)$-model definition. Moreover, with this choice of $L_{k+1}$ Algorithm~\ref{Alg1} generates the same sequences as in Algorithm~\ref{Alg1_nonadaptive}. We prove Thus, Theorem \ref{mainTheoremDL_G} is a corollary of Theorem \ref{mainTheoremDL_G_2}.


\begin{theorem}
	\label{mainTheoremDL_G_2}
		Let $V[x_0](x_*) \leq R^2$, where $x_0$~is the starting point, and $x_*$~is the nearest minimum point to the point $x_0$ in the sense of Bregman divergence $V[y](x)$. Then, for the sequence, generated by Algorithm~\ref{Alg1} the following inequality holds:
	\begin{equation}
	f(\bar{x}_N) - f(x_*) \leq \frac{R^2}{S_N}  + \widetilde{\delta} + \delta   \leq \frac{2LR^2}{N} + \widetilde{\delta} + \delta.
	\end{equation}
Moreover, for Algorithm~\ref{Alg1} the total number of attempts to solve \eqref{equmir2DL_G} is bounded by $2N + \log_2\frac{L}{L_0}$. 
\end{theorem}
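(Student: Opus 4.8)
The plan is to derive a one-step inequality linking the decrease of $f$ to a telescoping Bregman difference, then sum and apply convexity of $f$. First I would exploit the inexact optimality of the subproblem: since $x_{k+1}$ solves \eqref{equmir2DL_G} with $\widetilde{\delta}$-precision, Definition~\ref{def_precision} gives $\la \nabla\phi_{k+1}(x_{k+1}),\, x_{k+1}-x\ra \le \widetilde{\delta}$ for every $x\in Q$, where $\nabla\phi_{k+1}(x_{k+1}) = \nabla_x\psi_{\delta}(x_{k+1},x_k) + L_{k+1}\big(\nabla d(x_{k+1})-\nabla d(x_k)\big)$. I would lower-bound the first part by convexity of $\psi_{\delta}$ in its first argument, $\la \nabla_x\psi_{\delta}(x_{k+1},x_k),\, x_{k+1}-x\ra \ge \psi_{\delta}(x_{k+1},x_k)-\psi_{\delta}(x,x_k)$, and rewrite the second via the three-point identity $\la \nabla d(x_{k+1})-\nabla d(x_k),\, x_{k+1}-x\ra = V[x_{k+1}](x)+V[x_k](x_{k+1})-V[x_k](x)$. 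This yields
\[
\psi_{\delta}(x_{k+1},x_k)+L_{k+1}V[x_k](x_{k+1}) \le \psi_{\delta}(x,x_k)+L_{k+1}\big(V[x_k](x)-V[x_{k+1}](x)\big)+\widetilde{\delta}.
\]

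Next I would substitute this bound into the exit test \eqref{exitLDL_G} and use the left inequality of the $(\delta,L)$-model (Definition~\ref{model}), $\psi_{\delta}(x,x_k)\le f(x)-f(x_k)$, so that the $f(x_k)$ terms cancel. Taking $x=x_*$ and dividing by $L_{k+1}$ produces the recursion
\[
\frac{f(x_{k+1})-f(x_*)}{L_{k+1}} \le V[x_k](x_*)-V[x_{k+1}](x_*)+\frac{\widetilde{\delta}+\delta}{L_{k+1}}.
\]
Summing over $k=0,\dots,N-1$ telescopes the Bregman terms; discarding $V[x_N](x_*)\ge 0$ and using $V[x_0](x_*)\le R^2$ gives $\sum_{k=0}^{N-1}\frac{1}{L_{k+1}}\big(f(x_{k+1})-f(x_*)\big)\le R^2+S_N(\widetilde{\delta}+\delta)$. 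Applying convexity of $f$ (Jensen) to the weighted average $\bar{x}_N=\frac{1}{S_N}\sum_{k=0}^{N-1}\frac{x_{k+1}}{L_{k+1}}$ then gives $f(\bar{x}_N)-f(x_*)\le \frac{R^2}{S_N}+\widetilde{\delta}+\delta$, which is the first claimed bound.

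For the second bound I would show $S_N\ge \frac{N}{2L}$, which amounts to the uniform estimate $L_{k+1}\le 2L$. The key observation is that the $(\delta,L)$-model forces the exit test \eqref{exitLDL_G} to hold for the computed $x_{k+1}$ as soon as the trial constant is at least $L$, because $V\ge 0$. Since the backtracking selects the smallest admissible $i_k$, whenever $i_k\ge 1$ the previous trial value $L_{k+1}/2$ must have failed, hence $L_{k+1}/2<L$; together with the base assumption $L_0\le 2L$ an easy induction yields $L_{k+1}\le 2L$, so $\frac{1}{L_{k+1}}\ge \frac{1}{2L}$ and $S_N\ge \frac{N}{2L}$, giving $\frac{R^2}{S_N}\le \frac{2LR^2}{N}$.

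Finally, for the number of attempts I would count $i_k+1$ evaluations of \eqref{equmir2DL_G} at iteration $k$ and use the update rule $L_{k+1}=2^{i_k-1}L_k$, i.e. $i_k=\log_2(L_{k+1}/L_k)+1$; summing telescopes to $\sum_{k=0}^{N-1}(i_k+1)=2N+\log_2(L_N/L_0)$, and $L_N\le 2L$ bounds this by $2N+\log_2\frac{L}{L_0}$ up to an absolute constant. The hard part will be assembling the first-step inequality cleanly — in particular, orienting the convexity bound and the three-point identity so they combine with the model's left inequality and leave only a telescoping Bregman difference; once that recursion is in place, the averaging, the estimate $L_{k+1}\le 2L$, and the telescoping count are all routine.
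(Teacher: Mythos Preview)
Your proposal is correct and follows essentially the same approach as the paper: the paper packages your first two steps into Lemma~\ref{MainLemma} (the three-point/convexity bound) and Lemma~\ref{lemma_maxmin_3DL_G} (the one-step recursion), then sums, applies Jensen, and bounds $L_{k+1}\le 2L$ exactly as you do. The only minor differences are that the paper uses the subgradient form of Definition~\ref{def_precision} (since $\psi_\delta$ need not be smooth), and for the attempt count it simply cites \cite{nesterov2015universal} whereas you sketch the telescoping argument explicitly.
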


First, we need to prove two lemmas in order to obtain the final result. Let us prove Lemma \ref{MainLemma}.

	
\begin{lemma}\label{MainLemma}
	Let $\psi(x)$ be a convex function and
	\begin{gather*}
	y = {\arg\min_{x \in Q}}^{\widetilde{\delta}} \{\psi(x) + \beta V[z](x)\},
	\end{gather*}
	where $\beta \geq 0$.
Then
	\begin{equation*}
	\psi(x_*) + \beta V[z](x_*) \geq \psi(y) + \beta V[z](y) + \beta V[y](x_*) - \widetilde{\delta}.
	\end{equation*}
	\label{lemma_maxmin_2}
\end{lemma}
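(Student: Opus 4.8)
The plan is to read off the approximate first-order optimality condition for $y$ from Definition~\ref{def_precision}, pair it with the subgradient inequality coming from convexity of $\psi$, and finally convert the inner product involving the prox-function through the three-point identity for the Bregman divergence. Once these three ingredients are in place, the proof is a short chain of inequalities.

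First I would apply Definition~\ref{def_precision} to the auxiliary problem $\min_{x\in Q}\phi(x)$ with $\phi(x) := \psi(x) + \beta V[z](x)$, which is convex since $\psi$ is convex and $V[z](\cdot)$ is convex. Because $y$ is a $\widetilde{\delta}$-solution, there exists $h \in \partial\phi(y)$ with $\langle h, x_* - y\rangle \geq -\widetilde{\delta}$. As $V[z](x) = d(x) - d(z) - \langle\nabla d(z), x-z\rangle$ is differentiable in $x$ with $\nabla_x V[z](y) = \nabla d(y) - \nabla d(z)$, I can split $h = g + \beta\bigl(\nabla d(y) - \nabla d(z)\bigr)$ for some $g \in \partial\psi(y)$, so that
\begin{equation*}
\langle g, x_* - y\rangle \geq -\widetilde{\delta} - \beta\langle \nabla d(y) - \nabla d(z), x_* - y\rangle.
\end{equation*}

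Next I would invoke convexity of $\psi$ in the form $\psi(x_*) - \psi(y) \geq \langle g, x_* - y\rangle$ and chain it with the previous display to obtain $\psi(x_*) - \psi(y) \geq -\widetilde{\delta} - \beta\langle \nabla d(y) - \nabla d(z), x_* - y\rangle$. The last step eliminates the gradient term via the three-point identity
\begin{equation*}
V[z](x_*) = V[y](x_*) + V[z](y) + \langle \nabla d(y) - \nabla d(z), x_* - y\rangle,
\end{equation*}
which follows directly from the definition $V[y](x) = d(x) - d(y) - \langle \nabla d(y), x-y\rangle$. Substituting $\langle \nabla d(y) - \nabla d(z), x_* - y\rangle = V[z](x_*) - V[y](x_*) - V[z](y)$ and rearranging yields exactly the claimed inequality $\psi(x_*) + \beta V[z](x_*) \geq \psi(y) + \beta V[z](y) + \beta V[y](x_*) - \widetilde{\delta}$.

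The only delicate point I anticipate is the subgradient bookkeeping: justifying the splitting $\partial\phi(y) = \partial\psi(y) + \beta\nabla_x V[z](y)$ (legitimate here since the Bregman term is smooth) and, more importantly, ensuring that the inequality $\langle h, x_* - y\rangle \geq -\widetilde{\delta}$ from Definition~\ref{def_precision} is available precisely for the minimizer $x_*$ of \eqref{Problem}. In the smooth case this is automatic, since the defining relation $\max_{x\in Q}\langle\nabla\phi(y), y - x\rangle = \widetilde{\delta}$ holds for every $x \in Q$, hence for $x = x_*$. Everything else is routine substitution and rearrangement.
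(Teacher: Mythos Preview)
Your proposal is correct and follows essentially the same approach as the paper's proof: both use the $\widetilde{\delta}$-optimality condition from Definition~\ref{def_precision}, the subgradient inequality for $\psi$, and the three-point identity for the Bregman divergence, in the same order. The only cosmetic difference is that the paper writes the subgradient of the auxiliary function directly as $g + \beta\nabla_y V[z](y)$ with $g\in\partial\psi(y)$, whereas you first take $h\in\partial\phi(y)$ and then decompose it; your remark about this splitting being valid because the Bregman term is smooth is a welcome clarification that the paper leaves implicit.
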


\begin{proof}
	Using Definition \ref{solNemirovskiy}, we have:
	\begin{gather*}
		\exists g \in \partial\psi(y), \,\,\, \langle g + \at2{\beta}\nabla_y V[z](y), x_* - y \rangle \geq -\widetilde{\delta}.
	\end{gather*}
	Then inequality
	\begin{gather*}
		\psi(x_*) - \psi(y) \geq \langle g, x_* - y \rangle \geq \langle \at2{\beta}\nabla_y V[z](y), y - x_* \rangle - \widetilde{\delta}
	\end{gather*}
and equality
	\begin{gather*}
	\langle \nabla_y V[z](y), y - x_* \rangle=\langle \nabla d(y) - \nabla d(z), y - x_* \rangle=d(y) - d(z) - \langle \nabla d(z), y - z \rangle +\\ + d(x_*) - d(y) - \langle \nabla d(y), x_* - y \rangle - d(x_*) + d(z) + \langle \nabla d(z), x_* - z \rangle=\\=
	V[z](y) + V[y](x_*) - V[z](x_*)
	\end{gather*}
complete the proof.
\end{proof}

\begin{lemma}
\label{lemma_maxmin_3DL_G}
    We have the following inequality:
	\begin{gather*}
		\frac{f(x_{k+1})}{L_{k+1}} - \frac{f(x_*)}{L_{k+1}}\leq V[x_k](x_*) - V[x_{k+1}](x_*) + \at2{\frac{\widetilde{\delta}}{L_{k+1}}} + \frac{\delta}{L_{k+1}}.
	\end{gather*}
\end{lemma}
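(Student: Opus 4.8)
The plan is to combine three ingredients already available in the excerpt: the optimality-type inequality of Lemma~\ref{MainLemma}, the inner-loop exit condition~\eqref{exitLDL_G} of Algorithm~\ref{Alg1}, and the lower bound in the definition of the $(\delta, L)$-model~\eqref{model_def}.

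First I would apply Lemma~\ref{MainLemma} with $\psi(\cdot) = \psi_{\delta}(\cdot, x_k)$, which is convex in its first argument by Definition~\ref{model}, together with $z = x_k$, $\beta = L_{k+1} \ge 0$, and $y = x_{k+1}$. This choice is legitimate because, by~\eqref{equmir2DL_G}, the point $x_{k+1}$ is precisely the $\widetilde{\delta}$-approximate minimizer of $\psi_{\delta}(x, x_k) + L_{k+1} V[x_k](x)$ over $Q$. The lemma then yields
$$\psi_{\delta}(x_{k+1}, x_k) + L_{k+1} V[x_k](x_{k+1}) \le \psi_{\delta}(x_*, x_k) + L_{k+1} V[x_k](x_*) - L_{k+1} V[x_{k+1}](x_*) + \widetilde{\delta}.$$

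Next I would invoke the exit condition~\eqref{exitLDL_G} satisfied by the accepted $L_{k+1}$, namely $f(x_{k+1}) \le f(x_k) + \psi_{\delta}(x_{k+1}, x_k) + L_{k+1} V[x_k](x_{k+1}) + \delta$, and substitute into it the upper bound on $\psi_{\delta}(x_{k+1}, x_k) + L_{k+1} V[x_k](x_{k+1})$ obtained above. This gives
$$f(x_{k+1}) \le f(x_k) + \psi_{\delta}(x_*, x_k) + L_{k+1}\bigl(V[x_k](x_*) - V[x_{k+1}](x_*)\bigr) + \widetilde{\delta} + \delta.$$

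Finally, the left inequality in~\eqref{model_def}, applied with $x = x_*$ and $y = x_k$, reads $f(x_k) + \psi_{\delta}(x_*, x_k) \le f(x_*)$, which lets me replace $f(x_k) + \psi_{\delta}(x_*, x_k)$ by $f(x_*)$. Subtracting $f(x_*)$ from both sides and dividing through by $L_{k+1} > 0$ produces exactly the claimed inequality. The argument is essentially a mechanical composition of the three facts; the only point requiring care is matching the abstract parameters of Lemma~\ref{MainLemma} to the concrete objects generated by the algorithm and recognizing that the model's lower bound is precisely what converts $f(x_k) + \psi_{\delta}(x_*, x_k)$ into the clean upper bound $f(x_*)$. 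I do not anticipate any genuine obstacle beyond this bookkeeping.
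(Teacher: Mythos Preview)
Your proposal is correct and follows essentially the same approach as the paper: both combine the exit condition~\eqref{exitLDL_G}, Lemma~\ref{MainLemma} applied with $\psi(\cdot)=\psi_\delta(\cdot,x_k)$ and $\beta=L_{k+1}$, and the lower bound in~\eqref{model_def}, differing only in the order in which the first two ingredients are invoked.
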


\begin{proof}
    From the stopping criterion \eqref{exitLDL_G}, we have:
	\begin{gather*}
	f(x_{k+1}) \leq f(x_{k}) + \psi_{\delta}(x_{k+1}, x_{k}) + L_{k+1} V[x_{k}](x_{k+1}) + \delta.
	\end{gather*}
	Using Lemma \ref{lemma_maxmin_2} with $\psi(x)=\psi_{\delta}(x, x_{k})$ \at2{and $\beta=L_{k+1}$,} we obtain:
    \begin{gather*}
	 f(x_{k+1}) \leq f(x_{k}) + \psi_{\delta}(x_*,x_{k})
	 	 + L_{k+1}V[x_k](x_*) - L_{k+1}V[x_{k+1}](x_*) + \at2{\widetilde{\delta}} + \delta.
	\end{gather*}
    In view of the model definition \eqref{model_def}, we have:
	\begin{gather*}
	f(x_{k+1}) \leq f(x_*) + L_{k+1}V[x_k](x_*) - L_{k+1}V[x_{k+1}](x_*) + \at2{\widetilde{\delta}} + \delta.
	\end{gather*}
\end{proof}

\begin{remark}
\leavevmode
\label{remark_maxmin}
Let us show that
$L_{k} \leq 2L\quad \forall k \geq 0$.
For $k=0$ this is true from the fact that $L_0 \leq L$. For $k \geq 1$ this follows from the fact that we leave the inner cycle earlier than $L_{k}$ will be greater than $2L$. The exit from the cycle is guaranteed by the condition that there is an $(\delta, L)$-model for $f(x)$ at any point $x \in Q$.
\end{remark}

Finally, we prove the theorem.

\begin{proof}

Let us sum up the inequality from Lemma \ref{lemma_maxmin_3DL_G} from $0$ to $N - 1$:
	\begin{gather*}
		\sum_{k=0}^{N-1}\frac{f(x_{k+1})}{L_{k+1}} - S_N f(x_*) \leq V[x_0](x_*) - V[x_N](x_*) + S_N\widetilde{\delta} + S_N\delta.
	\end{gather*}
	Since $V[x_N](x_*) \geq 0$ and $V[x_0](x_*) \leq R^2$, we obtain inequality
	\begin{gather*}
		\sum_{k=0}^{N-1}\frac{f(x_{k+1})}{L_{k+1}} - S_N f(x_*) \leq R^2 + S_N\widetilde{\delta} + S_N\delta.
	\end{gather*}
	Let us divide both parts by $S_N$.
	\begin{gather*}
	\frac{1}{S_N}\sum_{k=0}^{N-1}\frac{f(x_{k+1})}{L_{k+1}} - f(x_*) \leq \frac{R^2}{S_N} + \widetilde{\delta} + \delta.
	\end{gather*}
	Using the convexity of $f(x)$ we can show that
	\begin{gather*}
		f(\bar{x}_N) - f(x_*) \leq \frac{R^2}{S_N} + \widetilde{\delta} + \delta.
	\end{gather*}
	Remains only to prove that $$\frac{1}{S_N} \leq \frac{2L}{N}.$$ 
	As it follows from Definition~\ref{model} and Remark~\ref{remark_maxmin} for all $k \geq 0$
$L_{k} \leq 2L$.
	Thus,  we have that $$\frac{1}{L_k} \geq \frac{1}{2L}$$ and $$S_N = \sum_{k=0}^{N}\frac{1}{L_k} \geq \frac{N}{2L}.$$

The total number of attempts to solve \eqref{equmir2DL_G} is bounded in the same way as in \cite{nesterov2015universal}.
\end{proof}

In the same way as it is done in \cite{anikin2017dual,chernov2016fast,dvurechensky2016primal-dual,dvurechensky2017adaptive,nesterov2018primal-dual}, one can show that the proposed method is primal-dual.

\section{Analysis of Algorithm \ref{Alg2} in the case of $(\delta,L,\mu)$-model}\label{sec: ProofThFedyor}

Now we consider a proof of Theorem \ref{thm_fedyor}. \pd{To analyze Algorithm \ref{Alg2}, assume that it works for $k$ iterations.} By Lemma \ref{MainLemma}\pd{,} for each $x \in Q$:
$$ -\widetilde{\delta} \leq \psi_\delta(x,x^k) - \psi_\delta(x^{k+1},x^k) + L^{k+1}V[x^k](x)-L^{k+1}V[x^{k+1}](x)-L^{k+1}V[x^k](x^{k+1}).$$
It means that
	\begin{equation} \label{equat1}
		L_{k+1}V[x^{k+1}](x)\leq\widetilde{\delta} + \psi_\delta(x,x^k)-\psi_\delta(x^{k+1}, x^k) +
	\end{equation}
	$$
	+ L_{k+1}V[x^k](x)-L_{k+1}V[x^k](x^{k+1}).
	$$
Further, $\psi_{\delta}(x, y)$ is a $(\delta, L)$-model w.r.t. $V[y](x)$ and from
	$$ f(x^{k+1}) \leq f(x^k)+\psi_\delta(x^{k+1},x^k)+L_{k+1}V[x^k](x^{k+1}) + \delta,$$
	we get
	$$-L_{k+1}V[x^k](x^{k+1}) \leq \delta - f(x^{k+1}) + f(x^k) + \psi_\delta(x^{k+1}, x^k) .$$
	 
Now \eqref{equat1} means
	\begin{equation}
		\label{eq2}
		L_{k+1}V[x^{k+1}](x) \leq \widetilde{\delta} + \delta - f(x^{k+1}) + f(x^k) + \psi_\delta(x, x^k) + L_{k+1}V[x^k](x).
	\end{equation}
	
\pd{Since} $\psi_{\delta}(x, y)$ is a $(\delta, L, \mu)$-model for $f$, we have: 
	$$f(x^k) + \psi_\delta(x,x^k) \leq f(x) - \mu V[x^k](x).$$
Considering \eqref{eq2}, we obtain:
	\begin{equation}
		\label{eq3}
		L_{k+1}V[x^{k+1}](x) \leq \widetilde{\delta} + \delta + f(x) - f(x^{k+1})+(L_{k+1}-\mu)V[x^k](x).
	\end{equation}
	
\pd{Set} $x = x_*$. \pd{Since $L_0 \le 2L$, }we have $L_{k+1} \le 2L$ for each $k \ge 0$. We also assume in Algorithm \ref{Alg2} that $L_{k+1} \ge \mu$. Thus, we have 
$$\dfrac{1}{2L} \le \dfrac{1}{L_{k+1}} \le \dfrac{1}{\mu} \quad (\forall  k = 0,1,2 \ldots).$$

Then we have $\forall i \in \mathbb{N}: i < k$
	\begin{equation}\label{eq1}
	\left(1 - \dfrac{\mu}{L_{k+1}}\right)\left(1 - \dfrac{\mu}{L_{k}}\right)\ldots\left(1 - \dfrac{\mu}{L_{k-i}}\right) \le \left(1 - \dfrac{\mu}{2L}\right)^{i+1}.
	\end{equation}
	Therefore, we obtain:
	\begin{gather*}
			V[x^{k+1}](x_*) \le \frac{f(x_*) - f(x^{k+1}) + \delta +\widetilde{\delta}}{L^{k+1}} + \left(1 - \dfrac{\mu}{L_{k+1}}\right)V[x^k](x_*),
	\end{gather*}	
	and 
	\begin{gather*}	
		\frac{f(x^{k+1}) - f(x_*)}{L_{k+1}}	+ V[x^{k+1}](x_*) \le \\
			\le \dfrac{\delta+\widetilde{\delta}}{L_{k+1}} + \left({1 - \dfrac{\mu}{L_{k+1}}}\right)V[x^k](x_*)\le (\delta+\widetilde{\delta})\left(\dfrac{1}{L_{k+1}}+\dfrac{1}{L_k}\left(1-\dfrac{\mu}{L_{k+1}}\right)\right)+ \\ +\left(1-\dfrac{\mu}{L_{k+1}}\right)\left(1-\dfrac{\mu}{L_{k}}\right)V[x^k](x_*) \le \ldots \le (\delta+\widetilde{\delta})\left(\dfrac{1}{L_{k+1}}+\dfrac{1}{L_k}\left(1-\dfrac{\mu}{L_k}\right)+ \right.\\
			+\left. \dfrac{1}{L_{k-1}}\left(1 - \dfrac{\mu}{L_k}\right)\left(1 - \dfrac{\mu}{L_{k-1}}\right)+ \ldots + \dfrac{1}{L_1}\left(1-\dfrac{\mu}{L_k}\right)\left(1-\dfrac{\mu}{L_{k-1}}\right)\ldots\left(1-\dfrac{\mu}{L_1}\right)\right) + \\ +\left(1-\dfrac{\mu}{L_{k+1}}\right)\left(1-\dfrac{\mu}{L_k}\right)\ldots \left(1-\dfrac{\mu}{L_1}\right)V[x^0](x_*).
	\end{gather*}

For further reasoning we introduce average parameter $\hat{L}$:
	$$1-\dfrac{\mu}{\hat{L}} = \sqrt[k+1]{ \left(1-\dfrac{\mu}{L^{k+1}}\right)\left(1-\dfrac{\mu}{L_{k}}\right)\ldots\left(1-\dfrac{\mu}{L_{1}}\right)}.
	$$
Note that by $L_i \ge \mu~(i=1,2,\ldots)$ 
	$$\min\limits_{1\le i \le k+1}L_i \le \hat{L} \le \max\limits_{1\le i \le k+1}L_i \pd{\leq 2L}.$$
	Now, taking into account \eqref{eq1}, we have:
	\begin{equation}\label{fedyor_str1}
\frac{f(x^{k+1}) - f(x_*)}{L_{k+1}} + V[x^{k+1}](x_*) \le \frac{\delta+\widetilde{\delta}}{\mu}\sum\limits_{i=0}^k\left(1-\dfrac{\mu}{2L}\right) + \left(1-\dfrac{\mu}{\hat{L}}\right)^{k+1}V[x^0](x_*) \le 
\end{equation}
\begin{equation}\label{fedyor_str2}
\le \dfrac{2L(\delta+\widetilde{\delta})}{\mu^2}\left(1 - \left(1-\dfrac{\mu}{2L}\right)^{k+1}\right) + \left(1-\dfrac{\mu}{\hat{L}}\right)^{k+1}V[x^0](x_*).
	\end{equation}

Finally, we have
\begin{equation}
V[x^{k+1}](x_*) \le \dfrac{2L(\delta+\widetilde{\delta})}{\mu^2}\left(1 - \left(1-\dfrac{\mu}{2L}\right)^{k+1}\right) + \left(1-\dfrac{\mu}{\hat{L}}\right)^{k+1}V[x^0](x_*).
\end{equation}
and by \eqref{fedyor_str1} -- \eqref{fedyor_str2} and $L^{k+1} \le 2L$ means:
\begin{equation}
f(x^{k+1}) - f(x_*) \le  \dfrac{4L^2(\delta+\widetilde{\delta})}{\mu^2}\left(1 - \left(1-\dfrac{\mu}{2L}\right)^{k+1}\right) + 2L\left(1-\dfrac{\mu}{\hat{L}}\right)^{k+1}V[x^0](x_*).
\end{equation}

\section{Analysis of Algorithm \ref{Alg1_nonadaptive} in the case of $(\delta,L,\mu)$-model}\label{sec: ProofTh}

\begin{theorem}\label{agaf_Th}
Let $\psi_{\delta}(x, y)$ be a $(\delta, L, \mu)$-model for $f$ w.r.t. $V[y](x)$ and $y_k = \argmin_{i = 1,..., k}(f(x_i))$. Then, after $k$ iterations of Algorithm \ref{Alg1_nonadaptive}, we have \begin{equation}\label{agaf_eq5}
V[x^{k+1}](x_*) \leq \dfrac{\delta+\widetilde{\delta}}{\mu} + \left(1-\dfrac{\mu}{L}\right)^{k+1}V[x^0](x_*).
\end{equation}
and
\begin{equation}\label{agaf_eq4}
f(y_{k+1}) - f(x_*) \le L\left(1-\frac{\mu}{L}\right)^{k+1}V[x^0](x_*) + \delta + \widetilde{\delta}.
\end{equation}
\end{theorem}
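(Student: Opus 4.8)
The plan is to mirror the argument for Theorem~\ref{thm_fedyor}, specialised to the non-adaptive choice $L_{k+1}=L$ for every $k$, and then to extract the sharper function-value estimate by a geometrically weighted telescoping that uses the best-iterate rule $y_{k+1}=\argmin_{i=1,\dots,k+1}f(x_i)$. First I would apply Lemma~\ref{MainLemma} with $\psi(\cdot)=\psi_\delta(\cdot,x_k)$, $\beta=L$, $z=x_k$, $y=x_{k+1}$, evaluated at $x=x_*$. For Algorithm~\ref{Alg1_nonadaptive} the upper part of the model,
\[
f(x_{k+1})\le f(x_k)+\psi_\delta(x_{k+1},x_k)+L\,V[x_k](x_{k+1})+\delta,
\]
holds automatically, being exactly \eqref{model_def} at $x=x_{k+1}$, $y=x_k$. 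Hence steps \eqref{equat1}--\eqref{eq3} go through verbatim with $L_{k+1}$ replaced by the fixed $L$, and combining them with the $\mu$-lower part $f(x_k)+\psi_\delta(x_*,x_k)\le f(x_*)-\mu V[x_k](x_*)$ yields the master inequality
\[
L\,V[x_{k+1}](x_*)\le \delta+\widetilde\delta+f(x_*)-f(x_{k+1})+(L-\mu)\,V[x_k](x_*).
\]

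To obtain \eqref{agaf_eq5} I would discard the nonpositive term $f(x_*)-f(x_{k+1})$, divide by $L$, and unroll the resulting contraction $V[x_{k+1}](x_*)\le \tfrac{\delta+\widetilde\delta}{L}+(1-\mu/L)\,V[x_k](x_*)$ as a geometric series, using $\sum_{i=0}^{k}(1-\mu/L)^i\le \tfrac{L}{\mu}$ to bound the accumulated error term by $\tfrac{\delta+\widetilde\delta}{\mu}$.

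The main obstacle is the tight constant $\delta+\widetilde\delta$ in \eqref{agaf_eq4}: simply dropping $V[x_{k+1}](x_*)$ and inserting the bound on $V[x_k](x_*)$ only gives the weaker constant $\tfrac{L}{\mu}(\delta+\widetilde\delta)$, so the negative Bregman term must be kept. I would therefore rewrite the master inequality as
\[
\frac{f(x_{j+1})-f(x_*)}{L}\le \frac{\delta+\widetilde\delta}{L}+\left(1-\frac{\mu}{L}\right)V[x_j](x_*)-V[x_{j+1}](x_*),
\]
multiply it by the weight $(1-\mu/L)^{k-j}$ and sum over $j=0,\dots,k$. Because $(L-\mu)(1-\mu/L)^{k-j}=L(1-\mu/L)^{k-j+1}$, the Bregman contributions telescope; after clearing the factor $1/L$, discarding the final nonpositive term $-L\,V[x_{k+1}](x_*)$, and using $f(x_{j+1})\ge f(y_{k+1})$ on the left, I get
\[
\big(f(y_{k+1})-f(x_*)\big)\,\Sigma\le (\delta+\widetilde\delta)\,\Sigma+L\left(1-\frac{\mu}{L}\right)^{k+1}V[x_0](x_*),
\]
where $\Sigma=\sum_{j=0}^{k}\left(1-\frac{\mu}{L}\right)^{k-j}=\frac{L}{\mu}\left(1-\left(1-\frac{\mu}{L}\right)^{k+1}\right)$. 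Dividing by $\Sigma$ leaves the coefficient $\frac{\mu(1-\mu/L)^{k+1}}{1-(1-\mu/L)^{k+1}}$ in front of $V[x_0](x_*)$, and the concluding step is the elementary bound $\frac{\mu}{1-(1-\mu/L)^{k+1}}\le L$, valid since $\left(1-\frac{\mu}{L}\right)^{k+1}\le 1-\frac{\mu}{L}$ for $k\ge 0$. This replaces the coefficient by $L\left(1-\frac{\mu}{L}\right)^{k+1}$ and yields exactly \eqref{agaf_eq4}.
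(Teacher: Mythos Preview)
Your proposal is correct and follows essentially the same route as the paper's proof: both derive the master inequality $L\,V[x_{k+1}](x_*)\le \delta+\widetilde\delta+f(x_*)-f(x_{k+1})+(L-\mu)V[x_k](x_*)$ from Lemma~\ref{MainLemma}, the upper model bound, and the $\mu$-lower bound, then obtain \eqref{agaf_eq5} by dropping the function-value term and summing a geometric series. For \eqref{agaf_eq4} the paper unrolls the recursion step by step, while you phrase the same computation as a $(1-\mu/L)^{k-j}$-weighted telescoping sum; either way one arrives at the intermediate estimate $f(y_{k+1})-f(x_*)\le \mu\,\frac{(1-\mu/L)^{k+1}}{1-(1-\mu/L)^{k+1}}V[x_0](x_*)+\delta+\widetilde\delta$ and concludes with the same elementary bound $\mu/\bigl(1-(1-\mu/L)^{k+1}\bigr)\le L$.
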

\begin{proof}
Clearly, $f(x_*) \leq f(x^{k+1})$ and
$$LV[x^{k+1}](x_*) \leq \widetilde{\delta} + \delta + (L-\mu)V[x^k](x_*),$$
i.e.
$$V[x^{k+1}](x_*) \leq \dfrac{1}{L}(\delta + \widetilde{\delta}) + \left(1 - \dfrac{\mu}{L}\right)V[x^k](x_*).$$
Further,
\begin{align*}
	V[x^{k+1}](x_*) \leq \dfrac{1}{L}(\delta+\widetilde{\delta}) + \left( 1-\dfrac{\mu}{L}\right)\left(\dfrac{1}{L}(\delta + \widetilde{\delta}) + \left(1 - \dfrac{\mu}{L}\right)V[x^{k-1}](x_*)\right) \leq \ldots \leq \\
	\leq \dfrac{1}{L}(\widetilde{\delta} + \delta)\left(1+ \left(1-\dfrac{\mu}{L}\right)+ \ldots + \left(1-\dfrac{\mu}{L}\right)^k \right) + \left(1-\dfrac{\mu}{L}\right)^{k+1}V[x^0](x_*).
\end{align*}

Therefore, taking into account the following fact
$$\sum\limits_{i=0}^k\left(1-\frac{\mu}{L}\right)^i < \frac{1}{1 - \left(1 - \frac{\mu}{L}\right)} = \frac{L}{\mu},$$
we obtain
\begin{equation}\label{agaf_eq5}
V[x^{k+1}](x_*) \leq \dfrac{\delta+\widetilde{\delta}}{\mu} + \left(1-\dfrac{\mu}{L}\right)^{k+1}V[x^0](x_*).
\end{equation}

Now we consider the question on convergence by function: 
\begin{gather*}
	V[x^{k+1}](x_*) \leq \left(f(x_*) - f(x^{k+1})+\delta+\widetilde{\delta}\right)\dfrac{1}{L}+\left(1-\dfrac{\mu}{L}\right)V[x^k](x_*) \leq\\
	\leq \left(f(x_*) - f(x^{k+1})+\delta+\widetilde{\delta}\right)\dfrac{1}{L} + \\ + \left(1-\dfrac{\mu}{L}\right)\left(\left(f(x_*) - f(x^{k})+\delta+\widetilde{\delta}\right)\dfrac{1}{L} + \left(1-\dfrac{\mu}{L}\right)V[x^{k-1}](x_*)\right) \leq\\
	\le \ldots \le \left(1-\dfrac{\mu}{L}\right)^{k+1} V[x^0](x_*)+ \dfrac{1}{L}\sum\limits_{i=0}^k \left(1-\dfrac{\mu}{L}\right)^i\left(f(x_*) - f\left(x^{k+1-i}\right)+\delta+\widetilde{\delta}\right).
\end{gather*}

Therefore, we have
$$\dfrac{1}{L}\sum\limits_{i=0}^k\left(1-\dfrac{\mu}{L}\right)^i(f(x^{k+1-i})-f(x_*)) \leq \left(1-\dfrac{\mu}{L}\right)^{k+1}V[x^0](x_*)+\dfrac{1}{L}\sum\limits_{i=0}^k\left(1-\dfrac{\mu}{L}\right)^i(\delta+\widetilde{\delta}).$$

Denote by $y_k = \argmin_{i = 1,..., k}(f(x_i))$. Then, taking into account
$$\frac{1}{L}\sum\limits_{i=0}^k\left(1-\frac{\mu}{L}\right)^i =  \frac{1}{\mu}\left(1-\left(1 - \frac{\mu}{L}\right)^{k+1}\right) \geq \frac{1}{L},$$
we obtain
\begin{equation}\label{agaf_eq4}
f(y_{k+1}) - f(x_*) \le \mu \dfrac{\left(1 - \frac{\mu}{L}\right)^{k+1}}{1-\left(1-\frac{\mu}{L}\right)^{k+1}}V[x^0](x_*) + \delta + \widetilde{\delta} \le 
\end{equation}
$$
\le L\left(1-\frac{\mu}{L}\right)^{k+1}V[x^0](x_*) + \delta + \widetilde{\delta}.
$$
\end{proof}

\section{Some Numerical Tests for Algorithms \ref{Alg1_nonadaptive} and \ref{Alg2}}

\pd{We consider} two numerical examples for Algorithms \ref{Alg1_nonadaptive} and \ref{Alg2} for minimizing $\mu$-strongly convex \pd{objective function} of $N$ variables on a  unit ball $B_1(0)$ with center at zero with respect to the standard Euclidean norm.  It is clear that such \pd{functions} admit $(\delta, L, \mu)$-model of the standard form $\psi_{\delta}(x, y) = \langle \nabla f(y), x - y \rangle$ for the case of Lipschitz-continuous gradient $ \nabla f$. In the first of the considered examples, it is easy to estimate $L$ and $\mu$, and the ratio $\frac{\mu}{L}$ is not very small, which ensures a completely acceptable rate of convergence of the non-adaptive method (see Table \ref{tab1} below).  In the second example, \pd{the objective is ill-conditioned meaning that} the ratio $\frac{\mu}{L}$ so small that the computer considers the value $1 - \frac{\mu}{L}$ to be equal to 1 and Theorem \ref{agaf_Th} for the non-adaptive algorithm does not allow to estimate the rate of convergence at all. In this case, the use of adaptive Algorithm \ref{Alg2} leads to noticeable results (see the Table \ref{tab2} below).

\begin{example}\label{fedyor_ex_1}
Consider a function
$$f(x)=x_1^2+2x_2^2+3x_3^2+\ldots+Nx_N^2,$$
where $N=100$ and input data $$x^0=\frac{(0.2,\ldots,0.2)}{||(0.2,\ldots,0.2)||} \text{ is the initial approximation },$$
$\mu=2$, $L^0=2\mu$, $L=2N$.

\begin{table}
\centering
\caption{Results for Example \ref{fedyor_ex_1}.}
\begin{tabular}{|c|c|c|c|c|}
\hline
\textbf{}    & \multicolumn{2}{c|}{\textbf{Non-adaptive}} & \multicolumn{2}{c|}{\textbf{Adaptive}} \\ \hline
\textbf{k} & \textbf{Time}      & \textbf{Estimate}     & \textbf{Time}    & \textbf{Estimate}   \\ \hline
160          & 0:01:19            & 0.19827               & 0:05:25          & 0.02110             \\ \hline
180          & 0:01:27            & 0.16220               & 0:05:55          & 0.01258             \\ \hline
200          & 0:01:36            & 0.13264               & 0:07:11          & 0.00750             \\ \hline
220          & 0:01:55            & 0.10849               & 0:07:19          & 0.00474             \\ \hline
240          & 0:01:57            & 0.08873               & 0:07:56          & 0.00282             \\ \hline
\end{tabular}
\label{tab1}
\end{table}

The results of the comparison of the work of algorithms 1 and 3 are presented in the comparative Table \ref{tab1}, where $k$~is the number of iterations of these algorithms.
\end{example}

As we can see from the Table 1, in the previous example the non-adaptive method converges no worse than the adaptive one. However, it is possible that $\frac{\mu}{L}$ is too small, which leads to $1-\frac{\mu}{L}\approx1$. In this case, Theorem \ref{agaf_Th} cannot estimate the rate of convergence of the method. We give another example.

\begin{example}\label{fedyor_ex_2}
Consider the target functional
$$f(x_1,\ldots,x_N)=\sum_{k=1}^N(kx_k^2+e^{-kx_k}).$$

It is easy to verify that for such a \pd{function} one can choose $\mu=2+\frac{1}{e}$ and $L=2N+N^2e$ and the program calculates the value of $1-\frac{\mu}{L}$ equal to 1. However, applying Algorithm \ref{Alg2} with adaptive tuning to the constant $L$ and Theorem \ref{thm_fedyor} we obtain meaningful results, which we present in Table \ref{tab2}.

\begin{table}
\centering
\caption{Results for Example \ref{fedyor_ex_2}.}
\begin{tabular}{|c|c|c|}
\hline
\textbf{}  & \multicolumn{2}{c|}{\textbf{Adaptive}} \\ \hline
\textbf{k} & \textbf{Time}    & \textbf{Estimate}   \\ \hline
50         & 0:07:37          & 0.71273             \\ \hline
100        & 0:14:27          & 0.51241             \\ \hline
150        & 0:23:00          & 0.372301            \\ \hline
200        & 0:28:07          & 0.27334             \\ \hline
250        & 0:34:32          & 0.19699             \\ \hline
300        & 0:43:10          & 0.14456             \\ \hline
\end{tabular}
\label{tab2}
\end{table}

\end{example}

Experiments were performed using CPython 3.7 software on a computer with a 3-core AMD Athlon II X3 450 processor with a clock frequency of 803.5 MHz per core. The computer's RAM was 8 GB.

\section{Complexity Analysis of Sinkhorn's Algorithm}\label{sec:Sinkhorn}

Let us consider regularized optimal transport problem
\begin{equation}\label{prob:reg_OT}
    \la C, \pi \ra + \gamma \sum_{i, j} \pi_{i j} \ln \pi_{i j} \to \min_{\pi \in \U(p, q)}.
\end{equation}
Recall that the dual problem to~\eqref{prob:reg_OT} is equivalent to 
\begin{equation}\label{prob:dual_OT}
    f(u, v) := \la \one B(u, v) \one \ra - \la u, p \ra - \la q, v \ra \to \min_{u, v \in \R^n},
\end{equation}
where $B(u, v) := \diag(e^u) e^{-C / \gamma} \diag(e^v)$ \cite{dvurechensky2018computational}. Below we present a slightly refined complexity analysis of Algorithm~\ref{Alg:Sinkhorn} based on the same approach as in \cite{dvurechensky2018computational}.
First, we prove that Sinkhorn's iterations are contractant for $e^{u^t - u^*}$ and $e^{v^t - v^*}$ in Hilbert's projective metric (cf. \cite{franklin1989scaling}).

\begin{lemma}\label{lemma:R_t}
    Let us define
    \begin{equation}
        R_t := 
        \begin{cases}
            \max_j (v_j^t - v_j^*) - \min_j (v_j^t - v_j^*), & t \bmod 2 = 0,\\
            \max_i (u_i^t - u_i^*) - \min_i (u_i^t - u_i^*), & t \bmod 2 = 1,
        \end{cases}
    \end{equation}
    where $(u^*, v^*)$ is the solution of problem~\eqref{prob:dual_OT}. Then for any $t \ge 0$ it holds $R_{t + 1} \le R_t$.
\end{lemma}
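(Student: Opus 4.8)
The plan is to reduce the claim to a non-expansion property of the Sinkhorn update in the Hilbert projective metric, which is precisely what $R_t$ measures. I will treat the case $t$ even (row update on $u$) in detail; the case $t$ odd follows by the symmetric argument with the roles of $u,v$ and $p,q$ interchanged.

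First I would record the optimality conditions for $(u^*,v^*)$. Since it minimizes $f$ in~\eqref{prob:dual_OT}, stationarity gives $\nabla_u f = B(u^*,v^*)\one - p = 0$ and $\nabla_v f = B(u^*,v^*)^T\one - q = 0$, i.e. $B(u^*,v^*)\one = p$ and $B(u^*,v^*)^T\one = q$. Next, for even $t$ the row update reads $u_i^{t+1} = u_i^t + \ln p_i - \ln(B(u^t,v^t)\one)_i$, and since $(B(u^t,v^t)\one)_i = e^{u_i^t}\sum_j K_{ij}e^{v_j^t}$, the $u_i^t$ terms cancel and we obtain the closed form $u_i^{t+1} = \ln p_i - \ln\sum_j K_{ij}e^{v_j^t}$, depending only on $v^t$. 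The first optimality condition gives the same relation at the optimum, $u_i^* = \ln p_i - \ln\sum_j K_{ij}e^{v_j^*}$.

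The key step is then to subtract these two identities and rewrite the difference as a negative logarithm of a convex combination:
\[
u_i^{t+1} - u_i^* = -\ln\frac{\sum_j K_{ij}e^{v_j^t}}{\sum_j K_{ij}e^{v_j^*}} = -\ln\Big(\sum_j w_{ij}\,e^{v_j^t - v_j^*}\Big),
\]
where $w_{ij} := K_{ij}e^{v_j^*}\big/\sum_{j'}K_{ij'}e^{v_{j'}^*}$ are strictly positive and sum to one over $j$. Thus $u_i^{t+1}-u_i^*$ is the negative logarithm of a convex combination of the numbers $e^{v_j^t-v_j^*}$, which necessarily lies between $e^{\min_j(v_j^t-v_j^*)}$ and $e^{\max_j(v_j^t-v_j^*)}$. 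Applying the decreasing map $-\ln(\cdot)$ yields, uniformly in $i$,
\[
-\max_j(v_j^t - v_j^*) \le u_i^{t+1} - u_i^* \le -\min_j(v_j^t - v_j^*),
\]
so that $\max_i(u_i^{t+1}-u_i^*)-\min_i(u_i^{t+1}-u_i^*) \le \max_j(v_j^t-v_j^*)-\min_j(v_j^t-v_j^*)$, i.e. $R_{t+1}\le R_t$.

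The only real obstacle is spotting the convex-combination representation: once the update is written as the negative logarithm of a weighted average whose weights are independent of the current iterate, the monotonicity of $\ln$ closes the argument immediately, and no contraction-coefficient estimate is needed for the non-strict bound. For odd $t$ I would repeat the computation with the column update $v_j^{t+1} = v_j^t + \ln q_j - \ln(B(u^t,v^t)^T\one)_j$, using the second optimality condition to obtain $v_j^{t+1}-v_j^* = -\ln\big(\sum_i w'_{ij}e^{u_i^t-u_i^*}\big)$ with column-normalized weights $w'_{ij}$, and conclude $R_{t+1}\le R_t$ in exactly the same fashion.
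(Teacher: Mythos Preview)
Your proof is correct and follows essentially the same approach as the paper: the paper also reduces $u_i^{t+1}-u_i^*$ to $-\ln$ of a convex combination of the $e^{v_j^t-v_j^*}$, using weights $\pi^*_{ij}/p_i$ (which coincide with your $w_{ij}$ since $\pi^*_{ij}=e^{u_i^*}K_{ij}e^{v_j^*}$ and $p_i=e^{u_i^*}\sum_{j'}K_{ij'}e^{v_{j'}^*}$), and then bounds the convex combination between the min and max. The only cosmetic difference is that the paper works directly with $\pi^*=B(u^*,v^*)$ rather than first isolating the closed-form expressions for $u^{t+1}$ and $u^*$.
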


\begin{proof}
    W.l.o.g. consider even $t$. Let us denote $\pi^* = B(u^*, v^*) \in \U(p, q)$. Then for any $i$
    \[
        u^{t + 1}_i - u^*_i
        = u^t_i - u^*_i + \ln p_i - \ln\left(\sum_j e^{u^t_i - u^*_i} \pi^*_{i j} e^{v^t_j - v^*_j}\right) 
        = - \ln\left(\sum_j \frac{\pi^*_{i j}}{p_i} e^{v^t_j - v^*_j}\right),
    \]
    and since $\sum_j \frac{\pi^*_{i j}}{p_i} = 1$ one obtains
    \[
    e^{\min_j (v^t_j - v^*_j)} 
    \le \sum_j \frac{\pi^*_{i j}}{p_i} e^{v^t_j - v^*_j} 
    \le e^{\max_j (v^t_j - v^*_j)},
    \]
    therefore,
    \[
    R_{t+1} \le R_t.
    \]
\end{proof}

Now repeating the proof of Theorem~1 from \cite{dvurechensky2018computational} we obtain the following complexity bound.
\begin{theorem}\label{thm:Sinkhorn_complexity}
    The inner cycle of Algorithm~\ref{Alg:Sinkhorn} stops in number of iterations
    \[
    N = O\left(\frac{R_0}{\e'}\right),
    \]
    and 
    \[
    R_0 \le \frac{\max_{i, j} C_{i j} - \min_{i, j} C_{i j}}{\gamma}.
    \]
\end{theorem}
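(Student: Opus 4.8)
The plan is to prove the two assertions separately: the iteration bound $N = O(R_0/\e')$, which follows the template of the proof of Theorem~1 in \cite{dvurechensky2018computational} once the non-increase of the span (Lemma~\ref{lemma:R_t}) is in hand, and the explicit estimate $R_0 \le (\max_{i,j}C_{ij} - \min_{i,j}C_{ij})/\gamma$, which is a short computation from the optimality conditions of the dual problem~\eqref{prob:dual_OT}.

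For the iteration bound I would set $f^* := f(u^*, v^*)$ and $\Delta_t := f(u^t, v^t) - f^*$, and first track the exact decrease of the dual objective along one Sinkhorn step. For an even step (a $u$-update) the new row marginals satisfy $B(u^{t+1}, v^t)\one = p$, and expanding $f$ gives, after cancellation, $\Delta_t - \Delta_{t+1} = KL(p \,|\, B(u^t, v^t)\one)$; Pinsker's inequality then yields $\Delta_t - \Delta_{t+1} \ge \tfrac12 \norm{B(u^t, v^t)\one - p}_1^2$, and symmetrically for odd ($v$-)updates. Since the complementary marginal was matched exactly by the previous half-step, this lower bound equals $\tfrac12$ times the squared current marginal error after the first step, so while the stopping criterion is violated the error exceeds $\e'$ and each step decreases $f$ by at least $\tfrac12(\e')^2$.

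Next I would bound the initial gap. By convexity of $f$, $\Delta_0 \le \la B(u^0,v^0)\one - p,\, u^0 - u^*\ra + \la B(u^0,v^0)^T\one - q,\, v^0 - v^*\ra$; exploiting the invariance of $f$ under the shift $(u,v) \mapsto (u + c\one,\, v - c\one)$ (valid because $p, q \in S_n(1)$) one may center the potential differences, so that H\"older's inequality pairs the $\ell_1$ marginal errors with the \emph{span} of $u^0 - u^*$ and $v^0 - v^*$, giving $\Delta_0 = O(R_0)$. Lemma~\ref{lemma:R_t} is exactly what makes this usable along the whole trajectory, since it guarantees that the span, and hence the relevant potential oscillation, never increases. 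Telescoping the per-step decreases against $\Delta_0 = O(R_0)$ and the termination rule then produces the iteration count $N = O(R_0/\e')$, as in \cite{dvurechensky2018computational}. For the span bound I would use the column-marginal optimality condition $B(u^*, v^*)^T\one = q$, i.e. $e^{v_j^*}\sum_i e^{u_i^*}K_{ij} = q_j$, which gives $v_j^* = \ln q_j - \ln\big(\sum_i e^{u_i^*}K_{ij}\big)$; since the algorithm initializes $v^0 = \ln q$, this yields $v_j^0 - v_j^* = \ln\big(\sum_i e^{u_i^*}K_{ij}\big)$. Bounding $K_{ij} = e^{-C_{ij}/\gamma} \in \big[e^{-\max_{i,j}C_{ij}/\gamma},\, e^{-\min_{i,j}C_{ij}/\gamma}\big]$ and taking the maximum over $j$ minus the minimum over $j$, the common term $\ln\sum_i e^{u_i^*}$ cancels and leaves $R_0 \le (\max_{i,j}C_{ij} - \min_{i,j}C_{ij})/\gamma$.

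The main obstacle is the iteration-bound part, specifically turning the exact per-step decrease into the claimed rate: the decrease is naturally governed by the \emph{squared} marginal error, so the delicate point is pairing it with the span control $R_t \le R_0$ (through the shift-invariance of $f$) in order to obtain the dependence on $R_0$ in $N$, rather than an $\ell_\infty$ quantity on the potentials. The bookkeeping of which marginal is exact after each half-step also needs care, but it is routine. The span estimate, by contrast, is essentially immediate from the dual optimality conditions.
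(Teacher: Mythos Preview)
Your approach is the same route the paper takes (the paper simply invokes Theorem~1 of \cite{dvurechensky2018computational}), and your bound on $R_0$ is correct. However, the iteration-count argument as you wrote it has a gap. From the two ingredients you actually use --- the per-step decrease $\Delta_t - \Delta_{t+1} \ge \tfrac12(\e')^2$ while the stopping rule is violated, and the initial-gap bound $\Delta_0 = O(R_0)$ --- plain telescoping gives only $N \le 2\Delta_0/(\e')^2 = O\big(R_0/(\e')^2\big)$, which is one power of $\e'$ off from the claim.

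What closes the gap is using the span control of Lemma~\ref{lemma:R_t} \emph{along the whole trajectory}, not only at $t=0$. After the first half-step one marginal is exact, so convexity together with the shift-invariance gives $\Delta_t \le R_t\,\norm{\mathrm{err}_t}_1 \le R_0\,\norm{\mathrm{err}_t}_1$ for every $t\ge 1$. Combining this with your Pinsker decrease $\Delta_t - \Delta_{t+1} \ge \tfrac12\norm{\mathrm{err}_t}_1^2$ eliminates the marginal error and yields the recursion $\Delta_t - \Delta_{t+1} \ge \Delta_t^2/(2R_0^2)$, hence $\Delta_t = O(R_0^2/t)$. A two-phase (``switching'') argument then finishes: reach $\Delta_t \le R_0\e'$ in $O(R_0/\e')$ steps via the recursion, and thereafter the constant per-step decrease $\ge \tfrac12(\e')^2$ exhausts the remaining $O(R_0\e')$ in at most $O(R_0/\e')$ further steps. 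You correctly flagged in your last paragraph that pairing the squared-error decrease with $R_t\le R_0$ is the delicate point; the issue is that ``telescoping the per-step decreases against $\Delta_0$'' does not actually implement that pairing, and without it the rate degrades.
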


Nptice that now we require an approximated solution of regularized problem~\eqref{prob:reg_OT}, thus the choice of $\e'$ in Algorithm~\ref{Alg:Sinkhorn} differs from the one from \cite{altschuler2017near-linear,dvurechensky2018computational}.
\begin{theorem}\label{thm:OT_accuracy}
    Algorithm~\ref{Alg:Sinkhorn} returns $\hat\pi \in \U(p, q)$ s.t.
    \[
    \la C, \hat\pi \ra + \gamma \sum_{i, j} \hat\pi_{i j} \ln \hat\pi_{i j} 
    \le \la C, \pi^* \ra + \gamma \sum_{i, j} \pi^*_{i j} \ln \pi^*_{i j} + \tilde\e,
    \]
    where $\pi^*$ is the solution of problem~\eqref{prob:reg_OT}.
\end{theorem}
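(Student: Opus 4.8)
The plan is to bound the regularized suboptimality of the rounded iterate by a primal--dual gap and then to control that gap through the stopping criterion on line~11 of Algorithm~\ref{Alg:Sinkhorn}. Write $\varphi(\pi):=\la C,\pi\ra+\gamma\sum_{ij}\pi_{ij}\ln\pi_{ij}$ for the objective of \eqref{prob:reg_OT}, extended to all entrywise-positive matrices, and let $g$ denote the concave dual objective associated with \eqref{prob:dual_OT} (so that $g=-\gamma f$ up to the additive constant coming from $\one^\top\pi\one=1$ on $\U(p,q)$). Since \eqref{prob:reg_OT} is strongly convex and Slater's condition holds, strong duality gives $\varphi(\pi^*)=\max_{u,v}g(u,v)\ge g(u^t,v^t)$, where $t$ is the terminal iteration. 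Hence $\varphi(\hat\pi)-\varphi(\pi^*)\le \varphi(\hat\pi)-g(u^t,v^t)$, and it suffices to bound this primal--dual gap.

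Setting $B:=B(u^t,v^t)$, I would split $\varphi(\hat\pi)-g(u^t,v^t)=\big(\varphi(\hat\pi)-\varphi(B)\big)+\big(\varphi(B)-g(u^t,v^t)\big)$. A direct computation using $\ln B_{ij}=u^t_i+v^t_j-C_{ij}/\gamma$ collapses the second bracket to $\gamma\big(\la u^t,B\one-p\ra+\la v^t,B^\top\one-q\ra\big)$, which, by $\ell_1$--$\ell_\infty$ duality and the stopping rule, is at most $\gamma\max\{\|u^t\|_\infty,\|v^t\|_\infty\}\big(\|B\one-p\|_1+\|B^\top\one-q\|_1\big)\le \gamma\max\{\|u^t\|_\infty,\|v^t\|_\infty\}\,\e'$. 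For the first bracket I would invoke the defining property of the rounding in Algorithm~2 of \cite{altschuler2017near-linear}, namely $\|\hat\pi-B\|_1\le \|B\one-p\|_1+\|B^\top\one-q\|_1\le \e'$. Since one marginal is matched exactly at termination we have $\one^\top B\one=1=\one^\top\hat\pi\one$, so after subtracting the constant $\min_{i,j}C_{ij}$ from $C$ the cost term obeys $|\la C,\hat\pi-B\ra|\le(\max_{i,j}C_{ij}-\min_{i,j}C_{ij})\|\hat\pi-B\|_1$, while the entropy difference $\gamma\sum_{ij}(\hat\pi_{ij}\ln\hat\pi_{ij}-B_{ij}\ln B_{ij})$ is treated separately.

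Two technical estimates remain. First, the Sinkhorn updates keep $(u^t,v^t)$ in a box governed by $C$, from which one gets $\max\{\|u^t\|_\infty,\|v^t\|_\infty\}=O\big((\max_{i,j}C_{ij}-\min_{i,j}C_{ij})/\gamma+\ln n\big)$; multiplying by $\gamma$ reproduces the first part of the prefactor in $\e'$. Second, and this is the main obstacle, the entropy difference must be controlled under an $\ell_1$ perturbation even though $x\mapsto x\ln x$ has slope $\ln x+1$ that blows up as $x\to0^+$, so a crude Lipschitz estimate fails. I would handle it using the lower bound $B_{ij}\ge e^{-O(\mathrm{range})}/n^2$ on the scaling-matrix entries (so the entries are at least polynomially small in $n$ and $\tilde{\e}$), which yields an effective Lipschitz constant $O(\ln(1/\min_{i,j}B_{ij}))=O(\mathrm{range}+\ln n)$ and hence a bound of the form $\gamma\,\ln(4\gamma n^2/\tilde{\e})\,\|\hat\pi-B\|_1$; this is precisely where the logarithmic correction $2\gamma\ln(4\gamma n^2/\tilde{\e})$ in the definition of $\e'$ originates.

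Finally I would collect the three contributions. Each is of the form $\mathrm{const}\cdot\big(\max_{i,j}C_{ij}-\min_{i,j}C_{ij}+2\gamma\ln(4\gamma n^2/\tilde{\e})\big)\,\e'$, so with $\e'=\tfrac{\tilde{\e}}{4}\big(\max_{i,j}C_{ij}-\min_{i,j}C_{ij}+2\gamma\ln(4\gamma n^2/\tilde{\e})\big)^{-1}$ the total gap is at most $\tilde{\e}$. This yields $\varphi(\hat\pi)\le\varphi(\pi^*)+\tilde{\e}$, which is the claimed bound, with $\hat\pi\in\U(p,q)$ guaranteed by construction of the rounding step.
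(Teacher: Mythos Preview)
Your primal--dual decomposition is a reasonable alternative route, but the two ``technical estimates'' you defer are precisely where the argument breaks. You assert that the Sinkhorn iterates stay in a box of size $O\big((\max_{i,j}C_{ij}-\min_{i,j}C_{ij})/\gamma+\ln n\big)$ and that $B_{ij}\ge e^{-O(\mathrm{range})}/n^2$, both \emph{independently of the marginals} $p,q$. Neither holds: after a row update one has $u_i^{t}=\ln p_i-\ln(K e^{v^{t-1}})_i$, so already the oscillation of $u^t$ carries the term $\ln(\max_i p_i/\min_i p_i)$; and since $\sum_j B_{ij}=p_i$ after that update, some entries $B_{ij}$ are forced below $\min_i p_i$. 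Theorem~\ref{thm:OT_accuracy} is stated for arbitrary $p,q\in S_n(1)$, and the value of $\e'$ in Algorithm~\ref{Alg:Sinkhorn} does not involve $p$ or $q$, so a proof along your lines would need these $p,q$-free bounds and they are simply false. (Centering $u$ by a constant, which is the usual fix, removes $\|u\|_\infty$ in favour of $\max_i u_i-\min_i u_i$, but that range still depends on $p$.)

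The paper avoids both obstacles. Instead of invoking the dual value, it uses that $B=B(u^t,v^t)$ is the \emph{exact} minimiser of $\varphi$ over the shifted polytope $\U(B\one,B^\top\one)$, and then rounds $\pi^*$ into that polytope via Algorithm~2 of \cite{altschuler2017near-linear}, obtaining $\tilde\pi^*$ with $\|\tilde\pi^*-\pi^*\|_1\le\|B\one-p\|_1+\|B^\top\one-q\|_1\le\e'$ and $\varphi(B)\le\varphi(\tilde\pi^*)$. Thus the bracket you handle through $g(u^t,v^t)$ and $\|u^t\|_\infty$ is replaced by $\varphi(\tilde\pi^*)-\varphi(\pi^*)$, a purely primal perturbation of size $\e'$ in $\ell_1$. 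For the entropy part of both brackets the paper uses the uniform continuity bound on the simplex,
\[
\Big|\sum_{i,j}\pi_{ij}\ln\pi_{ij}-\sum_{i,j}\tilde\pi_{ij}\ln\tilde\pi_{ij}\Big|
\le 2\|\pi-\tilde\pi\|_1\,\ln\!\frac{n^2}{\|\pi-\tilde\pi\|_1},
\]
which requires no lower bound on the entries; this is exactly what produces the logarithmic correction $2\gamma\ln(n^2/\e')$ in the prefactor, and after substituting the algorithm's $\e'$ one checks directly that the total is at most $\tilde\e$. If you want to salvage the dual-gap approach you would have to introduce an explicit dependence on $\min_i p_i,\min_j q_j$ into $\e'$, which the paper's argument does not need.
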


\begin{proof}
    Notice that for any $\pi \in \U(B(u,v) \one, B(u,v)^T \one)$ it holds
    \[
    \la C, B(u, v) \ra + \gamma \sum_{i, j} B(u,v)_{i j} \ln B(u,v)_{i j} 
    \le \la C, \pi \ra + \gamma \sum_{i, j} \pi_{i j} \ln \pi_{i j}. 
    \]
    It is easy to see that for any pair $\pi, \tilde\pi \in S_{n \times n}(1)$
    \[
    \abs{\sum_{i, j} \pi_{i j} \ln \pi_{i j} - \sum_{i, j} \tilde\pi_{i j} \ln \tilde\pi_{i j}} 
    \le n^2 h \ln \frac{1}{h} + \norm{\pi - \tilde\pi}_1 \ln \frac{1}{h} \quad \forall h \in (0, e^{-1}),
    \]
    thus
    \[
    \abs{\sum_{i, j} \pi_{i j} \ln \pi_{i j} - \sum_{i, j} \tilde\pi_{i j} \ln \tilde\pi_{i j}} 
    \le 2 \norm{\pi - \tilde\pi}_1 \ln\left(\frac{n^2}{\norm{\pi - \tilde\pi}_1}\right).
    \]
    Now, for any $\pi \in S_{n \times n}(1)$ and $r, c \in S_n(1)$ there exists $\tilde\pi \in \U(r, c)$ given by Algorithm~2 from \cite{altschuler2017near-linear} s.t.\ $\norm{\pi - \tilde\pi}_1 \le \norm{\pi \one - r}_1 + \norm{\pi^T \one - c}_1$.
    Combining all these facts together we obtain for $\hat\pi$ defined in Algorithm~\ref{Alg:Sinkhorn} the following estimate
    \[
    \la C, \hat\pi \ra + \gamma \sum_{i, j} \hat\pi_{i j} \ln \hat\pi_{i j}
    \le \la C, \pi^* \ra + \gamma \sum_{i, j} \pi^*_{i j} \ln \pi^*_{i j} + 2 \left(\max_{i, j} C_{i j} - \min_{i, j} C_{i j} + 2 \gamma \ln\left(\frac{n^2}{\e'}\right)\right) \e'
    \]
    Substituting 
    \[
    \e' = \frac{\tilde{\e}}{4 \left(\max_{i, j} C_{i j} - \min_{i, j} C_{i j} + 2 \gamma \ln\left(\frac{4 \gamma n^2}{\tilde{\e}}\right)\right)}
    \]
    we obtain
    \begin{align*}
        \la C, \hat\pi \ra &+ \gamma \sum_{i, j} \hat\pi_{i j} \ln \hat\pi_{i j} - \left[\la C, \pi^* \ra + \gamma \sum_{i, j} \pi^*_{i j} \ln \pi^*_{i j}\right] \\
        & \le 2 \left(\max_{i, j} C_{i j} - \min_{i, j} C_{i j} + 2 \gamma \ln\left(\frac{n^2}{\e'}\right)\right) \frac{\tilde{\e}}{4 \left(\max_{i, j} C_{i j} - \min_{i, j} C_{i j} + 2 \gamma \ln\left(\frac{4 \gamma n^2}{\tilde{\e}}\right)\right)} \\
        & \le \frac{\tilde{\e}}{2} \frac{\max_{i, j} C_{i j} - \min_{i, j} C_{i j} + 2 \gamma \ln\left(\frac{4 \gamma n^2}{\tilde{\e}}\right) + 2 \gamma \ln \frac{\tilde{\e}}{4 \gamma \e'}}{\max_{i, j} C_{i j} - \min_{i, j} C_{i j} + 2 \gamma \ln\left(\frac{4 \gamma n^2}{\tilde{\e}}\right)} \\
        & \le \frac{\tilde{\e}}{2} \left(1 + \frac{\tilde{\e} / (2 e \e')}{\max_{i, j} C_{i j} - \min_{i, j} C_{i j} + 2 \gamma \ln\left(\frac{4 \gamma n^2}{\tilde{\e}}\right)}\right) \le \tilde{\e}.
    \end{align*}
\end{proof}

\section{Additional Experiments for Prox-Sinkhorn Algorithm}
\label{S:app_prox_Sinkh_experim}

Figure \ref{fig:sinkhorn_inner} shows the dependence of the mean inner method iteration number upon accuracy and size of the vector $\pi$. With the growth of $L$, there is a decrease in the mean inner method iteration number. However, the type of dependence from the accuracy or size of the problem is the same. 

\begin{figure}[H]
    \centering
    \includegraphics[width=0.86\textwidth]{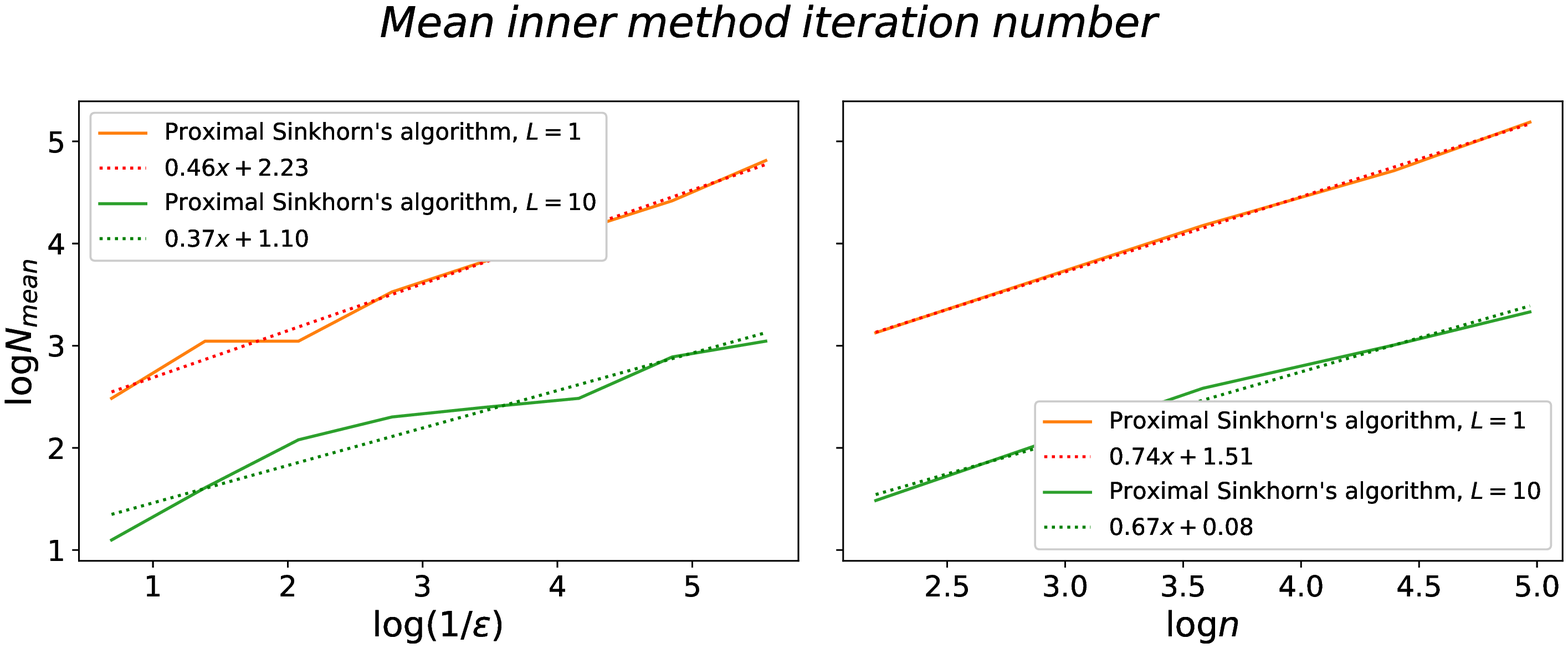}
    \caption{Comparison of inner method iteration number of proximal Sinkhorn's algorithm for different $L$. }
    \label{fig:sinkhorn_inner}
\end{figure}

Consider a graph of change of the auxiliary problem solution complexity with increasing external method iteration number (fig. \ref{fig:sinkhorn_inner2}). Note that at the first interval there is an increase in the inner method iteration number with two peaks at different levels. On subsequent iterations of the external method the complexity of the solution of the auxiliary problem decreases, approaching a constant.

\begin{figure}[H]
    \centering
    \includegraphics[width=0.48\textwidth]{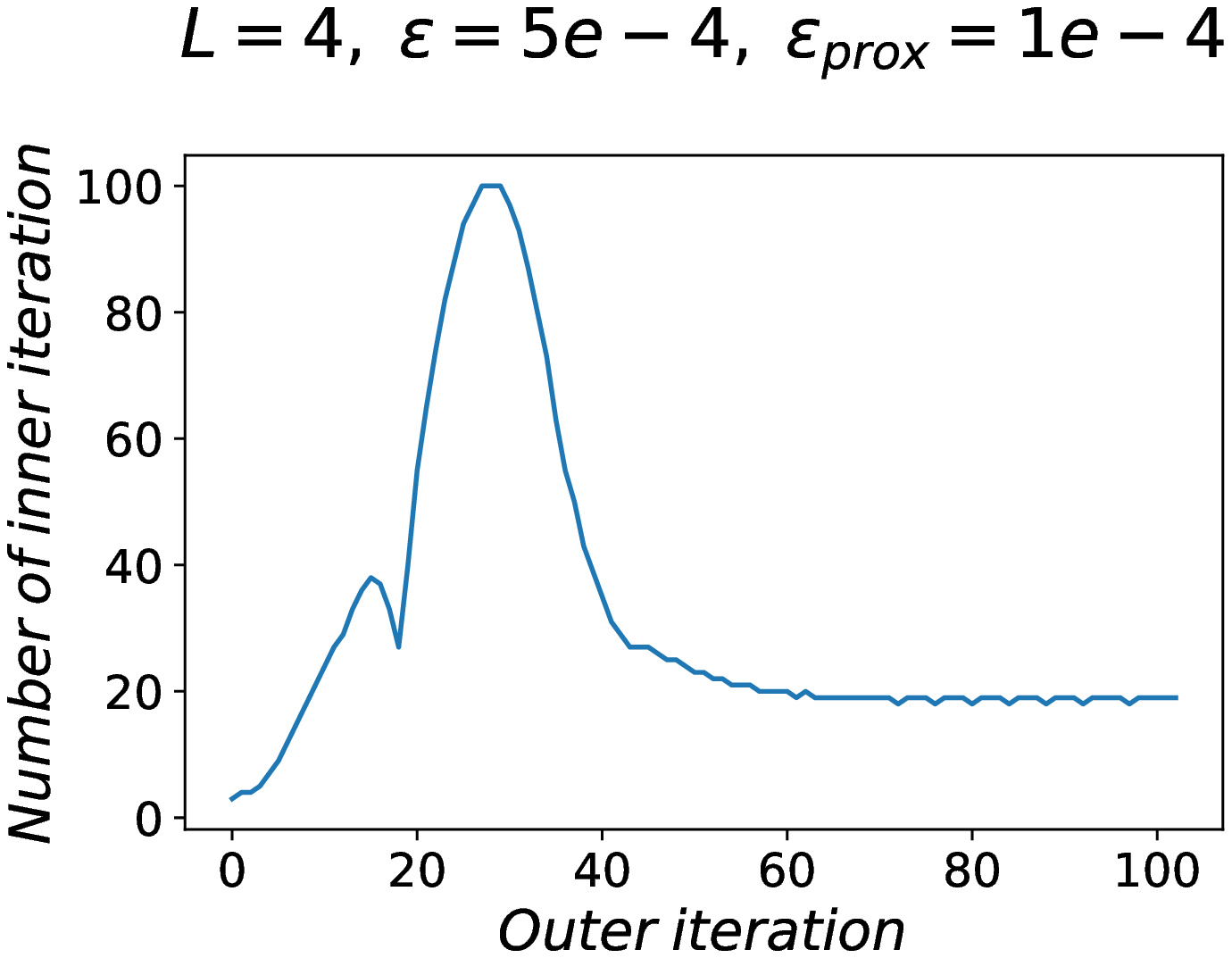}
    \caption{The dependence of the total \textit{Number of inner iteration} from the number of \textit{Outer iteration} of external method}
    \label{fig:sinkhorn_inner2}
\end{figure}

\section{On Inexact Solution of Auxiliary Subproblems}\label{InexactSolutions}





Our goal is to provide a relation between the accuracy of the solution of an optimization problem in terms of the objective residual and $\widetilde{\delta}$-`precision' in the sense of Definition \ref{def_precision}. 

\begin{theorem}
\label{lm:delta_eps} Assume that we find a point $\tilde x$ such that 
    $\phi(\tilde x) - \phi (\tilde x^*) \leq \widetilde \e$,
where $\tilde x^*$ is an exact solution of problem \eqref{min_prob}. 
Assume also that  $\phi$ has $\tilde{L}$-Lipschitz continuous gradient in $Q$. 

If $\nabla\phi(\widetilde{x}^*) = 0$, then $\tilde x = \argmin_{x \in Q}^{\widetilde{\delta}}\gav{\phi}(x)$ with 
$\widetilde{\delta} = \widetilde{R}\sqrt{2\tilde{L}\tilde{\e}}$,
where $\widetilde{R} = \max_{x,y\in Q}\|y-x\|$.

If $\phi$ is $\mu$-strongly convex on $Q$, then $\tilde x = \argmin_{x \in Q}^{\widetilde{\delta}}\gav{\phi}(x)$ with 
\begin{gather}\label{inexact}
\widetilde{\delta}= (\tilde{L}\widetilde{R}+\|\nabla\phi(\tilde{x}^*)\|_*)\sqrt{2\widetilde{\e}/\mu},
\end{gather}
\end{theorem}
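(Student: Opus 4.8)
The plan is to bound the gap function $\max_{x\in Q}\langle\nabla\phi(\tilde{x}),\tilde{x}-x\rangle$ appearing in Definition~\ref{def_precision} in both cases, by converting the assumed objective-residual accuracy $\phi(\tilde{x})-\phi(\tilde{x}^*)\le\tilde{\e}$ into control on either $\|\nabla\phi(\tilde{x})\|_*$ (case one) or $\|\tilde{x}-\tilde{x}^*\|$ (case two), and then applying the Cauchy--Schwarz inequality together with the diameter bound $\|\tilde{x}-x\|\le\tilde{R}$ valid for all $x\in Q$.

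For the first case, where $\nabla\phi(\tilde{x}^*)=0$, I would invoke the descent lemma for the $\tilde{L}$-smooth function $\phi$: minimizing the quadratic upper model $\phi(\tilde{x})+\langle\nabla\phi(\tilde{x}),\cdot-\tilde{x}\rangle+\tfrac{\tilde{L}}{2}\|\cdot-\tilde{x}\|^2$ over $\R^n$ yields $\min_{\R^n}\phi\le\phi(\tilde{x})-\tfrac{1}{2\tilde{L}}\|\nabla\phi(\tilde{x})\|_*^2$. Since $\nabla\phi(\tilde{x}^*)=0$ and $\phi$ is convex, $\tilde{x}^*$ is the unconstrained minimizer, so $\min_{\R^n}\phi=\phi(\tilde{x}^*)$; combined with the hypothesis this gives $\tfrac{1}{2\tilde{L}}\|\nabla\phi(\tilde{x})\|_*^2\le\tilde{\e}$, hence $\|\nabla\phi(\tilde{x})\|_*\le\sqrt{2\tilde{L}\tilde{\e}}$. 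Then for every $x\in Q$ one has $\langle\nabla\phi(\tilde{x}),\tilde{x}-x\rangle\le\|\nabla\phi(\tilde{x})\|_*\,\|\tilde{x}-x\|\le\tilde{R}\sqrt{2\tilde{L}\tilde{\e}}=\tilde{\delta}$, as claimed.

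For the second case, where $\phi$ is $\mu$-strongly convex on $Q$, I would first combine the strong-convexity inequality at $\tilde{x}^*$ with the first-order optimality condition $\langle\nabla\phi(\tilde{x}^*),x-\tilde{x}^*\rangle\ge0$ (valid for all $x\in Q$) to get $\tfrac{\mu}{2}\|\tilde{x}-\tilde{x}^*\|^2\le\phi(\tilde{x})-\phi(\tilde{x}^*)\le\tilde{\e}$, so that $\|\tilde{x}-\tilde{x}^*\|\le\sqrt{2\tilde{\e}/\mu}$. The crucial step is then the decomposition $\langle\nabla\phi(\tilde{x}),\tilde{x}-x\rangle=\langle\nabla\phi(\tilde{x})-\nabla\phi(\tilde{x}^*),\tilde{x}-x\rangle+\langle\nabla\phi(\tilde{x}^*),\tilde{x}-\tilde{x}^*\rangle+\langle\nabla\phi(\tilde{x}^*),\tilde{x}^*-x\rangle$. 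The last term is $\le0$ by optimality of $\tilde{x}^*$ over $Q$; the first is bounded by $\tilde{L}\|\tilde{x}-\tilde{x}^*\|\,\tilde{R}$ using the Lipschitz gradient and the diameter; and the middle one by $\|\nabla\phi(\tilde{x}^*)\|_*\,\|\tilde{x}-\tilde{x}^*\|$ via Cauchy--Schwarz. Summing these and substituting $\|\tilde{x}-\tilde{x}^*\|\le\sqrt{2\tilde{\e}/\mu}$ gives precisely $(\tilde{L}\tilde{R}+\|\nabla\phi(\tilde{x}^*)\|_*)\sqrt{2\tilde{\e}/\mu}=\tilde{\delta}$, matching \eqref{inexact}.

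The main obstacle is obtaining the sharp form of the second bound: a direct Cauchy--Schwarz applied to the full gradient $\nabla\phi(\tilde{x})=\nabla\phi(\tilde{x})-\nabla\phi(\tilde{x}^*)+\nabla\phi(\tilde{x}^*)$ would produce the weaker term $\|\nabla\phi(\tilde{x}^*)\|_*\,\tilde{R}$ in place of $\|\nabla\phi(\tilde{x}^*)\|_*\sqrt{2\tilde{\e}/\mu}$. The decomposition above is what makes the difference, since it isolates the term $\langle\nabla\phi(\tilde{x}^*),\tilde{x}^*-x\rangle$ that would otherwise scale with the diameter $\tilde{R}$ and discards it through the variational inequality characterizing the constrained minimizer $\tilde{x}^*$, leaving only a term proportional to the small quantity $\|\tilde{x}-\tilde{x}^*\|$.
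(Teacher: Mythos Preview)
Your proposal is correct and follows essentially the same route as the paper's proof: in the first case, both use the descent lemma to bound $\|\nabla\phi(\tilde{x})\|_*$ and then Cauchy--Schwarz with the diameter; in the second case, both use strong convexity to control $\|\tilde{x}-\tilde{x}^*\|$, then the same three-term decomposition $\langle\nabla\phi(\tilde{x}),\tilde{x}-x\rangle=\langle\nabla\phi(\tilde{x})-\nabla\phi(\tilde{x}^*),\tilde{x}-x\rangle+\langle\nabla\phi(\tilde{x}^*),\tilde{x}-\tilde{x}^*\rangle+\langle\nabla\phi(\tilde{x}^*),\tilde{x}^*-x\rangle$, dropping the last term via the first-order optimality condition. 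Your write-up is in fact a bit more explicit than the paper's about invoking optimality (both to justify the strong-convexity lower bound and to discard the third term), which is a plus.
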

\begin{proof}
1. Assume that $\nabla \phi(\tilde{x}^*)=0$. Then
\begin{align}\label{eq:Lip}
 \frac{1}{2\tilde{L}}\|\nabla \phi(\tilde{x})\|^2_* \leq   \phi(\tilde{x}) - \phi(\tilde{x}^*) \leq \widetilde{\e},
\end{align}
\begin{align}
\widetilde{\delta} = \max\limits_{x\in Q}\la \nabla \phi(\tilde{x}), \tilde{x}-x \ra \leq \|\nabla \phi(\tilde{x})\|_*\max\limits_{x\in Q}\|\tilde{x}-x\| \leq \sqrt{2\tilde{L}\widetilde{\e}}\max\limits_{x\in Q}\|\tilde x - x \|.
\end{align}
2. Let us now assume that $\nabla \phi(\tilde{x}^*)\neq 0 $.
For strongly convex function $\phi(x)$ we have
\begin{align}
    \frac{\mu}{2}\|\tilde x - \tilde x^*\|^2 \leq \phi(\tilde{x}) - \phi(\tilde{x}^*) \leq \widetilde{\e}. 
\end{align}
Hence, 
\begin{align}\label{eq_argum}
    \|\tilde x - \tilde x^*\|\leq \sqrt{\frac{2}{\mu}\widetilde{\e}}. 
\end{align}
Using this and Lipschitz gradient condition, we obtain
\begin{align}\label{eq_Lip_st}
    \|\nabla \phi(\tilde x) - \nabla \phi(\tilde x^*)\|_* \leq \tilde{L}\|\tilde x - \tilde x^*\| \leq \tilde{L}\sqrt{\frac{2}{\mu}\widetilde{\e}}.
\end{align}
Hence,
\begin{align}
    \widetilde{\delta} 
    &= \max_{x\in Q}\la \nabla\phi(\tilde{x}), \tilde x - x\ra = \max_{x\in Q}\la \nabla \phi(\tilde x) - \nabla\phi(\tilde x^*), \tilde x -x\ra +\max_{x \in Q} \la \nabla \phi(\tilde x^*), \tilde x - x\ra \notag \\
    &\overset{\eqref{eq_Lip_st}}{\leq} \tilde{L}\sqrt{\frac{2}{\mu}\widetilde{\e}}\max_{x\in Q}\|\tilde x - x\| + \max_{x \in Q}\la\nabla \phi(\tilde x^*), \tilde x^*-x\ra + \max_{x \in Q}\la \nabla \phi(\tilde x^*), \tilde x - \tilde x^*\ra \notag \\
    &\overset{\eqref{eq_argum}}{\leq} \tilde{L}\sqrt{\frac{2}{\mu}\widetilde{\e}}\max_{x\in Q}\|\tilde x - x\| + \|\nabla \phi(\tilde x^*)\|_* \sqrt{\frac{2}{\mu}\widetilde{\e}}.\notag
\end{align}
\end{proof}

\end{document}